\theoremstyle{plain}
\newtheorem{theorem}{Theorem}[section]
\newtheorem{proposition}[theorem]{Proposition}
\newtheorem{lemma}[theorem]{Lemma}
\newtheorem{corollary}[theorem]{Corollary}
\newtheorem{remark}[theorem]{Remark}
\newtheorem{definition}[theorem]{Definition}
\begin{document}

\title[Cartan ribbonization]{Cartan ribbonization\\and a topological inspection}

\author{Matteo Raffaelli}
\address{DTU Compute\\
Technical University of Denmark\\
2800 Kongens Lyngby\\
Denmark}
\email{matraf@dtu.dk}

\author{Jakob Bohr}
\address{DTU Nanotech\\
Technical University of Denmark\\
2800 Kongens Lyngby\\
Denmark}
\email{jabo@nanotech.dtu.dk}

\author{Steen Markvorsen}
\address{DTU Compute\\
Technical University of Denmark\\
2800 Kongens Lyngby\\
Denmark}
\email{stema@dtu.dk}

\date{September 30, 2018}

\begin{abstract}
We develop the concept of Cartan ribbons together with a rolling-based method to ribbonize and approximate any given surface in space by intrinsically flat ribbons. The rolling requires that the geodesic curvature along the contact curve on the surface agrees with the geodesic curvature of the corresponding Cartan development curve. Essentially, this follows from the orientational alignment of the two co-moving Darboux frames during rolling. Using closed contact center curves we obtain closed approximating Cartan ribbons that contribute zero to the total curvature integral of the ribbonization. This paves the way for a particularly simple topological inspection -- it is reduced to the question of how the ribbons organise their edges relative to each other. The Gauss--Bonnet theorem leads to this topological inspection of the vertices.
Finally, we display two examples of ribbonizations of surfaces, namely of a torus using two ribbons, and of an ellipsoid using closed curvature lines as center curves for the ribbons.
\end{abstract}

\maketitle

\section{Introduction}

The approximation of surfaces by patch-works of planar parts has a long use in fundamental and applied mathematics. Foremost comes to mind the multifaceted applications of triangulations \cite{schumaker1993}. In the present work we develop a scheme for approximating a surface by the use of multiple developable surfaces. Some of the beauty of this approach is the relatively few numbers of developable stretches -- ribbons -- needed to approximate a given  surface. Not to mention that the study of shapes and structures of developable surfaces is itself a classical subject that has intrigued mathematicians for centuries and has found numerous artistic applications in architecture and design, see  \cite{lawrence2011}.

In the seventies K. Nomizu pointed out that the concept of (extrinsic) rolling can be understood as a kinematic interpretation of the (intrinsic) Levi-Civita connection and of the Cartan development of curves, see \cite{nomizu1978} and \cite{Kobayashi1963}. One derives simple expressions for the components of the corresponding relative angular velocity vector of the rolling, i.e. the geodesic torsion, the normal curvature, and the geodesic curvature of the given curve and its development, see \cite{tuncer2007,cui2010,molina2014,hananoi2017, izumiya2015}. For example, in conjunction with a plane, the rolling must propagate along a planar curve which has the same geodesic curvature as the given curve, see examples in \cite{raffaelli2016}.

In recent years rolling has received a renewed wave of interest -- in part because of its importance for robotic manipulation of objects \cite{cui2015}. For example, there has been an interest in understanding rolling from symmetry arguments \cite{chitour2015} as well as purely geometrical considerations \cite{tuncer2010,chitour2014,krakowski2016}. Also, the shapes known as D-forms are examples of surface structures that are formed by assembling several developable surfaces \cite{sharp2005,wills2006,orduno2016}.

The paper is organized as follows: In sections \ref{secInitial} and \ref{secDevCartanRib} we apply the notion of rolling as an alternative entrance to the construction of developable surface approximations. We show how the method of rolling a surface along the planar Cartan development of a given curve on the surface produces a planar ribbon which -- after isometric bending along the lines of the instantaneous rotation axes -- will reproduce the surface approximation along the said curve. In other words, the rolling induces a local isometry between the flat approximation along the curve and the plane. Further in section \ref{secDevCartanRib} we discuss a specific measure of the  local goodness of a given ribbon approximation.
In section \ref{secGBinspect} we then initiate the corresponding study of such approximations by establishing a precise calculation of the Euler characteristic of the surfaces via an inspection of the family of approximating ribbons. Finally, in sections \ref{secExampTorus} and  \ref{secExampEllipsoid}, we illustrate the approximation method by two concrete examples which show the ensuing Cartan ribbon approximations of a torus (along two trigonometric center curves) and of an ellipsoid (along six lines of curvature), respectively.

\section{The Initial Setting} \label{secInitial}

We consider two surfaces $S$ and $\tilde{S}$ in $\mathbb{R}^{3}$.
Let $\gamma$ be a smooth, regular
curve on $S$, $ \gamma: \, J=[0,\alpha]\to S$, such that $\gamma(0)=(0,0,0)$. We equip $\gamma$ with its \emph{Darboux frame field} $\mathcal{F} = \{e, h, N\}$, defined as follows: for each $t \in J$ we let $N(t)$ denote a unit normal vector to $S$ at $\gamma(t)$, we let  $e(t) = \gamma'(t)/\Vert \gamma'(t) \Vert$ the unit tangent vector of $\gamma$ and $h(t) = N(t)\times e(t)$. The frame $\mathcal{F}$ then satisfies the following equations -- see for example  \cite[Corollary 17.24]{Gray}:
\begin{equation} \label{eq:01}
\begin{bmatrix}
e'(t) \\
h'(t) \\
N'(t)
\end{bmatrix}
= \Vert \gamma'(t) \Vert \cdot
\begin{bmatrix}
0                      &  \kappa_{g}(t)  &  \kappa_{n}(t) \\
-\kappa_{g}(t)  &  0                     &  \tau_{g}(t)      \\
-\kappa_{n}(t)  &  -\tau_{g}(t)      &  0
\end{bmatrix}
\begin{bmatrix}
e(t) \\
h(t) \\
N(t)
\end{bmatrix},
\end{equation}
where $\tau_{g}(t)$, $\kappa_{n}(t)$, and $\kappa_{g}(t)$ are the geodesic torsion, the normal curvature, and the geodesic curvature, respectively, of $\gamma$ at $\gamma(t)$. Since we are so far only considering local geometric entities, the surfaces $S$ and $\tilde{S}$ need not be orientable, i.e. the frame $\mathcal{F}$ and its properties -- such as the signs appearing in \eqref{eq:01} -- depend on the local choice of normal vector field $N$. In the final sections we will note a few consequences concerning the rolling and the corresponding ribbonization of non-orientable surfaces.

\subsection{Moving $S$ on $\tilde{S}$} \label{secSonStilde}

Given a curve $\gamma$ on $S$ as above, we now consider smooth and regular curves $\tilde{\gamma}$ on the other surface $\tilde{S}$ such that the  following initial compatibility and contact conditions are satisfied:
\begin{equation}
\begin{aligned}
\tilde{\gamma}(0)&=\gamma(0)  = (0,0,0)\\
\tilde{\gamma}'(0)&=\gamma'(0)\\
\Vert \tilde{\gamma}^{\prime}\Vert &= \Vert \gamma^{\prime} \Vert,
\end{aligned}
\end{equation}
so that $\tilde{\gamma}$ has the same initial point and direction as $\gamma$ and so that $\tilde{\gamma}$ has the same speed as $\gamma$ for all $t \in J$. A framed motion of $(S, \gamma)$ on $\tilde{S}$ is then defined as follows:

\begin{definition} \label{defMotion}
Let $E^{+}(3)$ be the group of direct isometries of $\mathbb{R}^3$. A (1-\emph{parameter}) \emph{framed motion} $g_{t}$ \emph{of} $(S, \gamma)$ \emph{on} $\tilde{S}$ \emph{along} $\tilde{\gamma}$ is a differentiable map $J \to E^{+}(3)$ such that for each $t$ the map $g_{t}$ is the isometry that maps
\begin{equation}
\begin{aligned}
\gamma(t) &\text{ to } \tilde{\gamma}(t), \\
\gamma(t) + e(t) &\text{ to } \tilde{\gamma}(t) + \tilde{e}(t), \\
\gamma(t) + N(t) &\text{ to } \tilde{\gamma}(t) + \tilde{N}(t) ,
\end{aligned}
\end{equation}
where $\tilde{e}$ and  $\tilde{N}$ are two of the members of the Darboux frame $\tilde{\mathcal{F}} = \{\tilde{e}\, , \,\, \tilde{h} = \tilde{N}\times \tilde{e} \, , \, \, \tilde{N} \}$ along $\tilde{\gamma}$ on $\tilde{S}$ defined in the same way as the frame ${\mathcal{F}}$ along $\gamma$ on $S$.
The point $\tilde{\gamma}(t)$ is called the \emph{contact point} at instant $t$, and $\tilde{\gamma}(J)$ is called the \emph{contact curve} of the framed motion $g_{t}$ of $(S, \gamma)$ on $\tilde{S}$.
\end{definition}

Since $g_{t}$ is in particular an instantaneous isometry it is represented by $x \mapsto R_{t} x + c_{t}$, where $R_{t} \in SO(3)$ is a rotation matrix and $c_{t}$ a translation vector. The instantaneous framed motion is then given by the vector field $V_{t} \, \colon x \mapsto \Omega_{t} (x - c_{t}) + c'_{t}$, with $\Omega_{t} = R'_{t} R_{t}^{\mathsf{T}}$, see \cite{nomizu1978}.
As $g_{t}$ is a \emph{framed motion} we have:

\begin{proposition} \label{propD}
Let $D_{t}$ be the matrix having $e(t)$, $h(t)$ and $N(t)$ as coordinate column vectors (with respect to a fixed coordinate system in $\mathbb{R}^{3}$) and similarly, let  $\tilde{D}_{t}$ be the matrix having $\tilde{e}(t)$, $\tilde{h}(t)$ and $\tilde{N}(t)$ as coordinate column vectors (with respect to the same fixed coordinate system in $\mathbb{R}^{3}$). Then
\begin{equation}
\begin{aligned}
R_{t} &= \tilde{D}_{t} D_{t}^{\mathsf{T}} \\
c_{t} &= \tilde{\gamma}(t) - R_{t}\gamma(t) ,
\end{aligned}
\end{equation}
so that
\begin{equation} \label{eqgt}
g_{t}(x) = \tilde{D}_{t}D_{t}^{\mathsf{T}}(x - \gamma(t)) + \tilde{\gamma}(t)  .
\end{equation}
\begin{proof}
The rotation $\tilde{D}_{t}D_{t}^{\mathsf{T}}$ maps the vector $e(t)$ to $\tilde{e}(t)$, and $N(t)$ to $\tilde{N}(t)$. The representation $g_{t}(x) = R_{t}x + c_{t}$ is therefore given by \eqref{eqgt}.
\end{proof}
\end{proposition}

\subsection{Rolling $S$ on $\tilde{S}$}

A framed motion $g_t$ of $(S, \gamma)$ on $\tilde{S}$ along $\tilde{\gamma}$ is said to be \emph{rotational} if, for all $t \in J$, $\Omega_{t}$ is different from the zero matrix.
At each time instant we can then find a unique vector $\omega_{t} \neq 0$, the \emph{angular velocity vector}, such that $\omega_{t} \times x = \Omega_{t}x$ for all $x \in \mathbb{R}^3$.

Based on the orientation of the angular velocity  vector relative to the common tangent plane of $g_{t}(S)$ and $\tilde{S}$, we introduce the following terminology for the instantaneous motion -- which extends directly to the entire motion.

\begin{definition} \label{defSpinetc}
The instantaneous rotational framed motion $g_t$ is a \emph{pure spinning} if the angular velocity vector $\omega_{t}$ is orthogonal to the tangent plane $T_{\tilde{\gamma}(t)}\tilde{S}$, and a \emph{pure twisting} if $\omega_{t}$ is proportional to the tangent vector $\tilde{e}(t)$. Finally, the motion $g_t$ will be called a \emph{standard rolling} if $\omega_{t}$ does not contain a spinning component and is not a pure twisting, i.e.~a standard rolling of  $S$ on $\tilde{S}$ is characterized by the condition that there exist smooth functions $a$ and $b$ such that $\omega_{t}$ decomposes as follows for all $t$:
\begin{equation} \label{eqCondRolling}
\omega_{t} = a(t)\cdot \tilde{e}(t) + b(t)\cdot \tilde{h}(t) + 0 \cdot \tilde{N}(t) , \quad b(t) \neq 0 \quad .
\end{equation}
\end{definition}

It turns out that a standard rolling of a given surface $S$ on a \emph{plane} gives a kinematic approach towards the construction of approximating developable ribbons that is presented below in section \ref{secDevCartanRib}. To begin with, we observe the following result for the more general situation of rolling $S$ on a general surface $\tilde{S}$:

\begin{proposition} \label{prop:1}
With the setting introduced above, a framed motion $g_{t}$ of $(S, \gamma)$ on $\tilde{S}$ along $\tilde{\gamma}$ is a standard rolling if and only if the following conditions are satisfied for all $t \in J$:
\begin{equation}\label{eqStandardRoll}
\begin{aligned}
\kappa_{g}(t) &= \tilde{\kappa}_{g}(t)\\
\kappa_{n}(t) & \neq \tilde{\kappa}_{n}(t) ,
\end{aligned}
\end{equation}
where $\tilde{\kappa}_{g}$ and $\tilde{\kappa}_{n}$ denote the geodesic curvature and the normal curvature of $\tilde{\gamma}$, respectively.
\end{proposition}

\begin{proof}
As in proposition \ref{propD}, $R_{t} = \tilde{D}_{t} D_{t}^{\mathsf{T}}$ and $c_{t} = \tilde{\gamma}(t) - R_{t}\gamma(t)$. Then, $g_{t}(x) = R_{t}x + c_{t}$, and so we can find the instantaneous motion $V_{t}$ by computing $\Omega_{t} (x - c_{t}) + c'_{t}$. Since $c'_{t} = \tilde{\gamma}'(t) - R'_{t} \gamma(t) - R_{t} \gamma'(t) = - R'_{t} \gamma(t)$ for $R_{t}$ maps $\gamma'(t)$ to $\tilde{\gamma}'(t)$, we obtain
\begin{equation}
\begin{aligned}
V_{t}(x)  &=  \Omega_{t} (x - \tilde{\gamma}(t) + R_{t}\gamma(t)) - R'_{t} \gamma(t) \\
              &=  \Omega_{t}x - \Omega_{t}\tilde{\gamma}(t) + R'_{t}\gamma(t)- R'_{t} \gamma(t) \\
              &=  \Omega_{t}(x - \tilde{\gamma}(t)),
\end{aligned}
\end{equation}
where $\Omega_{t} = R'_{t} R_{t}^{\mathsf{T}} = \tilde{D}'_{t} \tilde{D}_{t} + \tilde{D}_{t} D_{t}^{\prime \mathsf{T}} D_{t} \tilde{D}_{t}^{\mathsf{T}}$. If now we let
\begin{equation}
\varLambda_{t}=\Vert \gamma'(t) \Vert
\begin{bmatrix}
0  &  \kappa_{g}(t)  &  \kappa_{n}(t) \\
-\kappa_{g}(t)  &  0  &  \tau_{g}(t) \\
-\kappa_{n}(t)  &  -\tau_{g}(t)  &  0
\end{bmatrix},
\end{equation}
\noindent we have -- from \eqref{eq:01} -- that
$\tilde{D}^{\prime}_{t} =
\tilde{D}_{t}\tilde{\varLambda}_{t}^{\mathsf{T}} = - \tilde{D}_{t}\tilde{\varLambda}_{t}$ ($\tilde{\varLambda}_{t}$ is skew symmetric) as well as $D_{t}^{\prime \mathsf{T}} D_{t} = \varLambda_{t}$. Hence, if $\varXi_{t} = \varLambda_{t} - \tilde{\varLambda}_{t}$, that is
\begin{equation}
\varXi_{t} =
\begin{bmatrix}
0                         &  \varXi_{t}^{1,2}  &  \varXi_{t}^{1,3} \\
-\varXi_{t}^{1,2}  &  0                        &  \varXi_{t}^{2,3} \\
-\varXi_{t}^{1,3}  &  -\varXi_{t}^{2,3}  &  0
\end{bmatrix},
\end{equation}
where
\begin{equation}
\begin{aligned}
\varXi_{t}^{1,2} &= \Vert \gamma'(t) \Vert \cdot (\kappa_{g}(t) - \tilde{\kappa}_{g}(t)) \\
\varXi_{t}^{1,3} &= \Vert \gamma'(t) \Vert \cdot (\kappa_{n}(t) - \tilde{\kappa}_{n}(t)) \\
\varXi_{t}^{2,3} &= \Vert \gamma'(t) \Vert \cdot (\tau_{g}(t) - \tilde{\tau}_{g}(t)),
\end{aligned}
\end{equation}
the expression for $\Omega_{t}$ reduces to
\begin{equation} \label{eq:03}
\Omega_{t} = \tilde{D}_{t} \varXi_{t} \tilde{D}_{t}^{\mathsf{T}},
\end{equation}
and the resulting angular velocity vector of the rolling is thence -- with respect to the Darboux frame $\tilde{\mathcal{F}}(t) = \{\tilde{e}(t), \tilde{h}(t), \tilde{N}(t)\}$ along $\tilde{\gamma}$ in $\tilde{S}$:

{\small
\begin{equation} \label{eqomegaTilde}
\begin{aligned}
\omega_{t} &=\left(-\varXi_{t}^{2,3}\, , \, \, \varXi_{t}^{1,3} \, , \, \,  -\varXi_{t}^{1,2}   \right)_{\tilde{\mathcal{F}}(t)}  \\
&= \Vert \gamma'(t) \Vert \cdot
 \left( - \tau_{g}(t) + \tilde{\tau}_{g}(t) \, , \, \, \kappa_{n}(t) - \tilde{\kappa}_{n}(t)  \, ,
\, \, -\kappa_{g}(t) + \tilde{\kappa}_{g}(t)  \right)_{\tilde{\mathcal{F}}(t)}  .
\end{aligned}
\end{equation}
}
By comparing  \eqref{eqomegaTilde} with \eqref{eqCondRolling} we see that the conditions  \eqref{eqStandardRoll} are necessary and sufficient for $g_{t}$ to be a standard rolling.
\end{proof}

In passing we note -- for later use -- that \eqref{eqomegaTilde} and proposition \ref{prop:1} immediately give the coordinates of the pulled-back angular rotation vector  $\hat{\omega}_{t} = R_{t}^{\mathsf{T}}\omega_{t}$ with respect to the frame  ${\mathcal{F}}(t)$ for a  standard rolling:
\begin{equation}
\hat{\omega}_{t} = \Vert \gamma'(t) \Vert \cdot\left( - \tau_{g}(t) + \tilde{\tau}_{g}(t) \, , \, \, \kappa_{n}(t) - \tilde{\kappa}_{n}(t)  \, , \, \, 0  \right)_{{\mathcal{F}}(t)}  .
\end{equation}

The important special case in which $\tilde{S}$ is a plane is covered by the following corollary:

\begin{corollary} \label{corPlanarConseq}
If $\tilde{S}$ is a plane, then the motion $ g_{t}$ is a standard rolling if and only if
\begin{equation} \label{eqKappagn}
\begin{aligned}
\kappa_{g}(t) &= \tilde{\kappa}_{g}(t)\\
\kappa_{n}(t) & \neq 0  .
\end{aligned}
\end{equation}
The instantaneous angular rotation vector $\omega_{t}$ and its pull-back $\hat{\omega}_{t}$ are  correspondingly -- in $\tilde{\mathcal{F}}(t)$  and $\mathcal{F}(t)$ respectively:
\begin{equation} \label{eq:04}
\begin{aligned}
\omega_{t}&= \Vert \gamma^{\prime}(t) \Vert \cdot (-\tau_{g}(t), \kappa_{n}(t), 0)_{\tilde{\mathcal{F}}(t)} \\
&= \Vert \gamma^{\prime}(t) \Vert \cdot (-\tau_{g}(t), \kappa_{n}(t), 0)_{{\mathcal{F}}(t)}  ,
\end{aligned}
\end{equation}
where now  $\tilde{\mathcal{F}}(t) = \{ \tilde{e}(t), \tilde{e}_{3} \times \tilde{e}(t), \tilde{e}_{3}\}$ is the co-moving frame in the plane with constant normal vector field $\tilde{e}_{3}$ along $\tilde{\gamma}$.
\end{corollary}

\section{Developable Cartan surface ribbons} \label{secDevCartanRib}

In this section we show that the rolling discussed above serves as a tool for obtaining a flat developable approximation of the surface $S$ along $\gamma$. This is alternative to constructing developable approximations via envelopes of tangent planes along $\gamma$, see \cite[pp. 195-197]{DoCarmo}. In the recent work \cite{izumiya2015} osculating developable surfaces and their singularities have been studied, see also \cite{Izumiya2017}. It will follow from the condition \eqref{eqKappagn} that the approximating surface is free of singularities in a neighbourhood of $\gamma$, see theorem \ref{thmRibbonDevelop} below.

We first consider the notion of ruled surfaces, since developable surfaces constitute a special subcategory of those:

Let $w_{-}$ and $w_{+}$ denote two positive functions on the given $t$-interval $J$,  let $I = [-w_{-}(t), w_{+}(t)]$, and let $V$ denote the corresponding parameter domain in $\mathbb{R}^{2}$. A \emph{parametrized ruled surface (with boundary)} $r \colon V \to \mathbb{R}^{3}$ based on the center curve $\gamma$ is determined by a non-vanishing vector field $\beta$ along $\gamma$:
\begin{equation}
r(t,u) = \gamma(t) + u \cdot \beta(t), \quad t \in J, \quad u \in I.
\end{equation}
We will assume that $\beta$ is a unit vector field along $\gamma$ and that the surface $r$ is regular, i.e. its partial derivatives are linearly independent for all $u$ in the interval $[-w_{-}(t), w_{+}(t)]$,  $t \in J$. Regularity implies in particular that
\begin{equation} \label{eq:06}
\beta(t) \neq \pm e(t) \text{ for all } t \in J.
\end{equation}

Moreover, the surface $r(V)$ is flat (with Gaussian curvature zero at all points, i.e.~developable), precisely when the following condition is satisfied -- see \cite[p. 194]{DoCarmo}:
\begin{equation} \label{eq:07}
\beta' \cdot ( \beta \times e ) = 0.
\end{equation}

If $r(V)$ is eventually to be constructed so that it becomes a flat approximation of  $S$ along $\gamma$, we need to find a regular parametrization $r$ such that $r(V)$ is developable and has the same normal field $N$  as $S$ along $\gamma$. It means that we need  to determine the vector function $\beta$ so that it fulfills \eqref{eq:06}, \eqref{eq:07}, and
\begin{equation} \label{eq:08}
\beta \cdot N = 0.
\end{equation}

The desired vector function $\beta$ is precisely (modulo length and sign) the previously encountered pulled-back angular velocity vector $\hat\omega_{t}$ along $\gamma$ associated with the rolling of $S$ along $\tilde{\gamma}$ on a plane, see \cite{raffaelli2016}:

\begin{theorem} \label{thmRibbonDevelop}
Let $\gamma$ denote a smooth curve on a surface $S$ and let $\mathcal{F} = \{ e, h, N\}$ be the corresponding Darboux frame field along $\gamma$. Suppose that the normal curvature function $\kappa_{n}$ for $\gamma$ on $S$ never vanishes. Then there exists a unique developable surface which contains $\gamma$ and which has everywhere the same tangent plane as $S$ along $\gamma$. It is parametrized as follows:
\begin{equation}\label{eqCartanDevPar}
r(t,u) = \gamma(t) + u \cdot \frac{\hat{\omega}_{t}}{\Vert \hat{\omega}_{t} \Vert}  , \quad u \in [w_{-}(t), w_{+}(t)] , \quad t \in J.
\end{equation}
where $\hat{\omega}_{t}$ denotes the pulled-back angular velocity vector:
\begin{equation}
\begin{aligned}
\hat{\omega}_{t} &= \kappa_{n}(t)\cdot h(t) - \tau_{g}(t)\cdot e(t)  ,\\
\Vert \hat{\omega}_{t}\Vert &= \sqrt{\kappa_{n}^{2}(t) + \tau_{g}^{2}(t)}  .
\end{aligned}
\end{equation}
\end{theorem}

\begin{proof}
Write $\beta$ in terms of its coordinate functions $\beta \cdot e$ and   $\beta \cdot h$, substitute into equation \eqref{eq:07} and apply equation \eqref{eq:01} to express the derivatives of $e$ and $h$. Then
\begin{equation}
\frac{\beta \cdot e}{\beta \cdot h} = -\frac{\tau_{g}}{\kappa_{n}}  ,
\end{equation}
and the result follows upon normalization of the solution $\beta$. The ruling directions of the developable surface are thus given by the instantaneous angular velocity vector of the rolling.
\end{proof}

\begin{definition}
The developable surface, which is parametrized by \eqref{eqCartanDevPar} -- and which is therefore approximating the surface $S$ -- will be called the \emph{Cartan surface ribbon} along $\gamma$ on $S$.
\end{definition}

As is already in the name, the Cartan surface ribbon can be \emph{developed} isometrically into a planar ribbon:

\begin{definition}
 The \emph{associated Cartan planar ribbon} for $\gamma$ on $S$  -- which is defined along $\tilde{\gamma}$ in the plane -- is now determined via \eqref{eqPlanarCartan} in the proposition below, which also establishes the isometry between the two Cartan ribbons.
\end{definition}

\begin{proposition} \label{propIsometry}
An isometry from the Cartan surface ribbon onto the associated Cartan planar ribbon is realized along the development curve $\tilde{\gamma}$ in the following way, which is in precise accordance with the previously found rolling of $S$ along $\gamma$ on the plane with contact curve $\tilde{\gamma}$. We simply map the point $r(t,u)$ to the point
\begin{equation} \label{eqPlanarCartan}
\begin{aligned}
\tilde{r}(t, u) &= \tilde{\gamma}(t) + u \cdot \frac{{\omega}_{t}}{\Vert {\omega}_{t} \Vert} \\
&= \tilde{\gamma}(t) + u \cdot \frac{{\omega}_{t}}{\sqrt{\kappa_{n}^{2}(t) + \tau_{g}^{2}(t)}} .
\end{aligned}
\end{equation}
\end{proposition}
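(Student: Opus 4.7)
The plan is to establish two things: that the map $r(t,u)\mapsto\tilde r(t,u)$ of \eqref{eqPlanarCartan} coincides with the rolling motion $g_t$ evaluated at $r(t,u)$, and that this map is an isometry between the two ribbons.

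For the kinematic part, the rotation $R_t = \tilde D_t D_t^{\mathsf T}$ sends the frame $\mathcal F(t)$ to $\tilde{\mathcal F}(t)$ by construction. Since, by Corollary \ref{corPlanarConseq}, the coordinates of $\hat\omega_t$ in $\mathcal F(t)$ and of $\omega_t$ in $\tilde{\mathcal F}(t)$ are identical, namely $\Vert\gamma'\Vert(-\tau_g,\kappa_n,0)$, we have $R_t\hat\omega_t = \omega_t$ and $\Vert\hat\omega_t\Vert = \Vert\omega_t\Vert$. Therefore $g_t(r(t,u)) = R_t\bigl(u\,\hat\omega_t/\Vert\hat\omega_t\Vert\bigr) + \tilde\gamma(t) = \tilde r(t,u)$, fixing the geometric meaning of the map.

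For the isometry, I would compute the coefficients of the first fundamental forms of both ruled parametrizations. Setting $\beta := \hat\omega_t/\Vert\hat\omega_t\Vert$ and $\tilde\beta := \omega_t/\Vert\omega_t\Vert$, both unit, we get $G = \tilde G = 1$. Writing $\beta = f\,e + g\,h$ and $\tilde\beta = f\,\tilde e + g\,\tilde h$ with the common coefficients $f = -\tau_g/\sqrt{\kappa_n^2+\tau_g^2}$ and $g = \kappa_n/\sqrt{\kappa_n^2+\tau_g^2}$, the identity $F = \tilde F$ follows at once since $\gamma' = \Vert\gamma'\Vert\,e$ and $\tilde\gamma' = \Vert\gamma'\Vert\,\tilde e$. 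For $E = \tilde E$, I would differentiate $\beta$ and $\tilde\beta$ via \eqref{eq:01}; for the plane, by Corollary \ref{corPlanarConseq} we have $\tilde\kappa_g = \kappa_g$ while $\tilde\kappa_n = \tilde\tau_g = 0$, so the $\tilde e$- and $\tilde h$-components of $\tilde\beta'$ can be read off directly. The $e$- and $h$-components of $\beta'$ then coincide with these; what still must be verified is that the $N$-component of $\beta'$ vanishes.

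The main obstacle is exactly this last check. A direct substitution shows that the $N$-component of $\beta'$ equals $\Vert\gamma'\Vert(f\kappa_n + g\tau_g)$, which is identically zero for the above $f,g$. Conceptually this expresses the fact that $\beta$ stays tangent to $S$: differentiating \eqref{eq:08} and using $N' = -\Vert\gamma'\Vert(\kappa_n e + \tau_g h)$ yields $\beta'\cdot N = -\beta\cdot N' = 0$. With $\beta'$ now purely tangential, one obtains $\Vert\beta'\Vert = \Vert\tilde\beta'\Vert$ and $\gamma'\cdot\beta' = \tilde\gamma'\cdot\tilde\beta'$, whence $E = \tilde E$, and the claimed isometry between the Cartan surface ribbon and the associated Cartan planar ribbon follows.
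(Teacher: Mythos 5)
Your proof is correct and takes essentially the same route as the paper's: there the isometry is likewise established by checking that all scalar products among $\{\gamma'(t),\hat{\beta}(t),\hat{\beta}'(t)\}$ agree with those among $\{\tilde{\gamma}'(t),\beta(t),\beta'(t)\}$ (using $\kappa_{g}=\tilde{\kappa}_{g}$), so that the two first fundamental forms have identical coordinate functions in $(t,u)$. You simply make explicit two points the paper leaves implicit -- the vanishing of the $N$-component of the ruling derivative, via $f\kappa_{n}+g\tau_{g}=0$, and the identification of the map \eqref{eqPlanarCartan} with the rolling $g_{t}$ through $R_{t}\hat{\omega}_{t}=\omega_{t}$ -- which is a welcome sharpening rather than a different argument.
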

\begin{proof}
We let $\beta(t) = \omega_{t}/\Vert \omega_{t} \Vert$ and $\hat{\beta}(t) = \hat{\omega}_{t}/\Vert \hat{\omega}_{t} \Vert$. Since $\kappa_{g}(t) = \tilde{\kappa}_{g}(t)$ all the scalar products between two vectors chosen from $\{\gamma'(t), \hat{\beta}(t), \hat{\beta}'(t)\}$  are the same as the scalar products between the corresponding two vectors chosen from $\{\tilde{\gamma}'(t), {\beta}(t), {\beta}'(t)\}$. It follows that the two first fundamental forms for $r(t, u)$  and $\tilde{r}(t, u)$ respectively, have identical coordinate functions. The two ribbons $r$ and $\tilde{r}$ are therefore isometric.
\end{proof}


\begin{remark}
In all of the above constructions we have assumed that the center curves in question have nowhere vanishing normal curvature.
For a number of cases the normal curvature does vanish, such as on planar faces of polyhedra and through lines of inflections on generalized cylindrical faces. The method of approximation by  ribbons can be extended to these surfaces by cut and paste along the singular rulings under the condition that the geodesic torsion also vanishes together with the normal curvature.
For example, for surfaces containing planar domains, the ribbonization can be continued over any edge of the planar domain if the ruling of the ribbon agrees with the given edge. For polyhedral surfaces this is always possible. A ribbon with planar patches will also be denoted a Cartan ribbon, see the later section on Euler's polyhedral formula.

\end{remark}

\subsection{Curvature and parallel transport} \label{subsecCurvParal}

In view of our observations concerning the rolling of $S$ on the plane, it now makes sense to say that the Cartan surface ribbon can be \emph{rolled} isometrically onto the associated Cartan planar ribbon. This is induced in the way just described by the rolling of $S$ on the plane, which itself is represented by the pulled-back angular velocity vector field $\hat{\omega}$ along ${\gamma}$ in $S$ and by ${\omega}$ along $\tilde{\gamma}$ in the plane. Accordingly, once the center curve $\tilde{\gamma}$ in the plane has been constructed using $\tilde{\kappa}_{g}(t) = \kappa_{g}(t)$, then the approximating  Cartan \emph{surface} ribbon can be obtained via the inverse rolling of the Cartan \emph{planar} ribbon backwards into contact with the surface $S$ along $\gamma$. An early hint of this connection is presented in \cite[pp. 227-228]{SingerThorpe}.

The key object for the actual construction of the approximating Cartan surface ribbon along a given curve $\gamma$ on $S$ is thence the planar curve  $\tilde{\gamma}$, which may itself be constructed either by rolling, or -- simpler -- by integrating the curvature function $\kappa_{g}$  of $\gamma$, but in the plane, in the well known way, see \cite{DoCarmo}:

\begin{proposition}
Suppose $\tilde{\gamma}$ has (signed) curvature $\kappa_{g}$ and speed $\Vert \tilde{\gamma}'\Vert = v$. Then, modulo rotation and translation in the plane, we have:
\begin{equation} \label{eqCurvConstruct}
\tilde{\gamma}(t) = \int_{0}^{t}\,v(\hat{t})\cdot (\cos(\varphi(\hat{t}))\, , \,\, \sin(\varphi(\hat{t})) ) \, d\hat{t}
\end{equation}
where
\begin{equation}
\varphi(\hat{t}) = \int_{0}^{\hat{t}} v(\hat{u})\cdot \kappa_{g}(\hat{u}) \, d\hat{u}  .
\end{equation}
\end{proposition}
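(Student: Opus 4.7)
My plan is to recover $\tilde{\gamma}$ from its prescribed data by first extracting the angle $\varphi(t)$ that its unit tangent makes with a fixed reference direction in the plane, and then integrating.

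First I would write $\tilde{\gamma}'(t) = v(t)\cdot \tilde e(t)$, where $\tilde e$ is the unit tangent. Since $\tilde e$ lives in $\mathbb{R}^{2}$, I can introduce a smooth angle function $\varphi(t)$ with
\begin{equation*}
\tilde e(t) = (\cos\varphi(t),\,\sin\varphi(t)),
\end{equation*}
uniquely determined up to a constant shift (this constant encodes the rotational freedom stated in the proposition). The companion vector in the Darboux frame is then $\tilde h(t) = (-\sin\varphi(t),\,\cos\varphi(t))$, and the constant normal $\tilde N$ plays no role.

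The key step is to identify $\varphi'(t)$ with $v(t)\kappa_g(t)$. Differentiating $\tilde e(t) = (\cos\varphi,\sin\varphi)$ directly gives $\tilde e'(t) = \varphi'(t)\cdot \tilde h(t)$. On the other hand, Equation~\eqref{eq:01} applied to $\tilde\gamma$ in the plane (where $\tilde\kappa_n \equiv 0$ and $\tilde\tau_g \equiv 0$) yields $\tilde e'(t) = v(t)\,\tilde\kappa_g(t)\,\tilde h(t)$. Comparing the two expressions gives $\varphi'(t) = v(t)\kappa_g(t)$, and integration from $0$ to $t$ produces the stated formula for $\varphi$ (the initial value $\varphi(0)$ being absorbed into the overall rotation).

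Finally, I would integrate $\tilde\gamma'(t) = v(t)(\cos\varphi(t),\sin\varphi(t))$ from $0$ to $t$ to obtain
\begin{equation*}
\tilde\gamma(t) = \tilde\gamma(0) + \int_0^t v(\hat t)\,(\cos\varphi(\hat t),\,\sin\varphi(\hat t))\,d\hat t,
\end{equation*}
where the translational freedom corresponds to the choice of $\tilde\gamma(0)$. No step here is a serious obstacle; the only subtlety worth emphasizing is the smooth existence of the lifted angle $\varphi$, which holds because $\tilde e$ is a smooth map from an interval into $S^{1}$ and hence admits a smooth lift to $\mathbb{R}$ unique up to an additive constant.
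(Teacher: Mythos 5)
Your proof is correct and is precisely the standard argument the paper has in mind: the paper states this proposition without proof, citing do Carmo for ``the well known way,'' and your lift of the unit tangent to a smooth angle function $\varphi$ with $\varphi' = v\kappa_g$ (via the Darboux equations with $\tilde\kappa_n = \tilde\tau_g = 0$) followed by integration is exactly that classical argument. Your attention to the existence and uniqueness-up-to-a-constant of the smooth lift, and to how the constants of integration account for the ``modulo rotation and translation'' clause, is appropriate and complete.
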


The curve $\tilde{\gamma}$ appears as a special -- and simple --  example of a \emph{Cartan development} as already alluded to via the reference to Nomizu's initial work, see \cite{nomizu1978}.
This is why the ensuing developable ribbons are called \emph{Cartan surface ribbons}. To be a bit more specific concerning our simple $2$-dimensional setting, we recall in particular the important geodesic curvature equivalence used above:

We let the tangent space $T_{\gamma(0)}S$ at $\gamma(0)$ represent the plane $\tilde{S}$ into which we want to construct the Cartan development curve corresponding to the given curve $\gamma$ in $S$. For each $t$ we consider the parallel transport of the tangent vector $\gamma'(t)$ along  $\gamma$ from the point $\gamma(t)$ to the point $\gamma(0)$, see
\cite[p. 131]{Kobayashi1963}:
\begin{equation}
X(t) = \Pi_{\gamma}^{\gamma(t)\, ,\, \gamma(0)}\left( \gamma'(t)\right)  .
\end{equation}
The Cartan development $\tilde{\gamma}$ of $\gamma$ in $T_{\gamma(0)}S$ is then:
\begin{equation}
\tilde{\gamma}(t) = \int_{0}^{t} \, X(u)\,\, du .
\end{equation}
From this construction it follows in particular that
\begin{proposition}
Any tangent vector $\tilde{\gamma}'(t_{1}) = X(t_{1})$ is itself parallelly transported (in the usual Euclidean sense) along $\tilde{\gamma}$ in the tangent space $T_{\gamma(0)}S$ (which may be canonically identified with $T_{\tilde{\gamma}(0)}\tilde{S}$) from $(0,0)$ to $\tilde{\gamma}(t_{1})$ and the (geodesic) curvature function of the planar curve $\tilde{\gamma}$ is equal to the geodesic curvature function of the original curve $\gamma$ in $S$:
\begin{equation}
\tilde{\kappa}_{g}(t) = \kappa_{g}(t) \quad \textrm{for all $t$.}
\end{equation}
\end{proposition}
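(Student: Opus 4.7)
The plan is to proceed in two distinct stages, exploiting that development is built from parallel transport and that parallel transport on a flat plane is trivial.

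First I would verify the parallel-transport statement. Directly differentiating the definition $\tilde{\gamma}(t)=\int_{0}^{t} X(u)\,du$ gives $\tilde{\gamma}'(t)=X(t)$, a vector in the flat vector space $T_{\gamma(0)}S$. Because $T_{\gamma(0)}S$ carries a canonical (translation-invariant) connection, Euclidean parallel transport amounts to keeping coordinates constant in any global orthonormal frame. Under the canonical identification $T_{\tilde{\gamma}(t)}\tilde{S}\simeq T_{\gamma(0)}S$, the vector $X(t_1)$ at the base point $\tilde{\gamma}(t_1)$ is literally the translate of the same vector placed at $(0,0)$, so the first assertion is immediate.

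For the curvature equality I would use the defining property of the Levi-Civita parallel transport: for any vector field $V$ along $\gamma$, writing $P(t)=\Pi_{\gamma}^{\gamma(t),\gamma(0)}$, one has $\frac{d}{dt}\bigl(P(t)V(t)\bigr)=P(t)\bigl(\nabla_{\gamma'(t)}V(t)\bigr)$. Apply this to $V(t)=e(t)$. Since $P$ is a linear isometry, set $\bar e(t)=P(t)e(t)$ and $\bar h(t)=P(t)h(t)$; these remain unit and mutually orthogonal in $T_{\gamma(0)}S$. The tangential part of $e'(t)$ extracted from equation~\eqref{eq:01} is $\nabla_{\gamma'(t)}e(t)=\Vert\gamma'(t)\Vert\,\kappa_{g}(t)\,h(t)$, so
\begin{equation}
\bar e'(t)=\Vert\gamma'(t)\Vert\,\kappa_{g}(t)\,\bar h(t).
\end{equation}

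To close the argument I would identify the two pictures. Writing $v(t)=\Vert\gamma'(t)\Vert$, we have $\tilde{\gamma}'(t)=X(t)=P(t)(v(t)e(t))=v(t)\bar e(t)$; hence $\Vert\tilde{\gamma}'\Vert=v$ and the unit tangent of $\tilde{\gamma}$ is $\tilde e(t)=\bar e(t)$. Orienting the plane $\tilde S=T_{\gamma(0)}S$ by $\tilde N=N(0)$ ensures $\tilde h(t)=\tilde N\times\tilde e(t)=\bar h(t)$, since $P$ preserves the orientation of the tangent plane along $\gamma$. The planar Darboux formula $\tilde e'(t)=\Vert\tilde{\gamma}'(t)\Vert\,\tilde\kappa_{g}(t)\,\tilde h(t)$ compared with the identity above then yields $\tilde\kappa_{g}(t)=\kappa_{g}(t)$. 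The only real subtlety — which I would pin down carefully — is the sign convention: one has to make the orientation choice for $\tilde N$ coherent with $N(0)$ so that no sign flip contaminates the identification $\tilde h=\bar h$; everything else is a clean transcription of the parallel-transport rule.
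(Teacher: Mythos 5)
Your proposal is correct, and it takes a genuinely different route from the paper. The paper's proof is a two-line appeal to a classical lemma: for any parallel field $Y$ along $\gamma$, the angle $\theta(t)=\angle(Y(t),\gamma'(t))$ satisfies $\theta'=\kappa_g$ (do Carmo's characterization of geodesic curvature via holonomy angle); since the development sends $\gamma'$ to $X=\Pi(\gamma')$ and sends parallel fields to constant fields while preserving angles, $\tilde\theta=\theta$ and hence $\tilde\kappa_g=\kappa_g$ by differentiation. You instead work at the level of frames: you invoke the commutation rule $\frac{d}{dt}\bigl(P(t)V(t)\bigr)=P(t)\bigl(\nabla_{\gamma'(t)}V(t)\bigr)$, extract $\nabla_{\gamma'}e=\Vert\gamma'\Vert\,\kappa_g\,h$ from the Darboux equations \eqref{eq:01}, and compare with the planar Frenet equation for $\tilde\gamma$. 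Both arguments are sound; yours is longer but more self-contained and makes explicit the one piece of bookkeeping that the paper's angle argument leaves implicit, namely that the identification $\tilde h=\bar h$ involves no sign flip. That step, which you rightly flag, is easily closed in either of two ways: show that the rotation $J=N\times\cdot$ is parallel along $\gamma$ (the tangential part of $(N\times X)'$ is $N\times\nabla_{\gamma'}X$, since $N'$ is tangential and $N'\times X$ is normal for tangential $X$), so $P(t)\bigl(N(t)\times X\bigr)=N(0)\times P(t)X$; or simply note that $\bar h(t)=\pm\,\tilde N\times\bar e(t)$ pointwise with a continuous sign that equals $+1$ at $t=0$, hence $+1$ throughout. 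With that filled in, your comparison of $\bar e'(t)=\Vert\gamma'(t)\Vert\,\kappa_g(t)\,\bar h(t)$ against $\tilde e'(t)=\Vert\tilde\gamma'(t)\Vert\,\tilde\kappa_g(t)\,\tilde h(t)$ delivers the signed equality $\tilde\kappa_g=\kappa_g$ cleanly, and your treatment of the first (Euclidean parallel transport) assertion — trivial by translation invariance of the flat connection on $T_{\gamma(0)}S$ — matches what the paper takes as read.
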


\begin{proof}
Suppose $Y$ is any parallel vector field along the curve $\gamma$ on the surface $S$, then the angle $\theta(t) = \angle(Y(t), \gamma'(t))$ gives the geodesic curvature of $\gamma$ via $\theta'(t) = \kappa_{g}(t)$. Since the same holds true by construction along the development curve $\tilde{\gamma}$ in the tangent plane, we get $\tilde{\theta}(t) = \theta(t)$ so that $\tilde{\kappa}_{g} = \kappa_{g}$.
\end{proof}

\subsection{A measure of local goodness of Cartan ribbon approximations} \label{subsecGoodness}

A measure of the goodness of a single ribbon approximation along a given center curve $\gamma$ can be obtained from the following construction.
Close to $\gamma$ the surface $S$
can be parametrized as a graph surface 'over' the Cartan ribbon in the direction of \emph{the normal field $N$ of the ribbon} as follows:
\begin{equation}
S_{\varepsilon}\, : \,\,\,  \sigma(t,u) =  \gamma(t) + u \cdot \left( \frac{\kappa_{n}(t) h(t) - \tau_{g}(t) e(t)}{\sqrt{\kappa_{n}^{2}(t) +\tau_{g}^{2}(t) }}\right) + f(t,u)\cdot N(t)  \,  ,  \, t \in J \, ,  \, u \in [-\varepsilon, \varepsilon]  ,
\end{equation}
where $f$ denotes the corresponding 'height' function and $\varepsilon$ is everywhere smaller than each of the width functions $w_{-}$ and $w_{+}$ for all $t \in J$ along $\gamma$. (Both width functions have positive minima since they are positive and $J$ is closed.) The function $f$ clearly has $f(t,0) = f'(t,0) = 0$ for all $t \in J$, so that
\begin{equation}
f(t,u) = \frac{1}{2}f''(t,0)\cdot u^{2} + O(u^{3}) \,\, \textrm{for each} \,\, t \in J \, \, \textrm{and for all} \, \, u \in [-\varepsilon, \varepsilon]  .
\end{equation}

The domain in space that is enclosed 'between' the surface $S_{\varepsilon}$ and the Cartan Ribbon is thence parametrized as follows:
\begin{equation}
\begin{aligned}
\mathcal{D}_{\varepsilon}\, : \, \, R(t,u,w) &= \gamma(t) + u \cdot \left(\frac{\kappa_{n}(t) h(t) - \tau_{g}(t) e(t)}{\sqrt{\kappa_{n}^{2}(t) +\tau_{g}^{2}(t) }}\right) + w \cdot f(t,u) \cdot N(t)  ,\\
\textrm{where} \, \, t &\in J \, \, , \, \,  u \in [-\varepsilon, \varepsilon] \, \, , \, \, w \in [0, 1]  .
\end{aligned}
\end{equation}

\begin{definition}
We consider the volume of the domain $\mathcal{D}_{\varepsilon}$ as a natural local measure of goodness $\mathcal{M}(\gamma, \varepsilon)$ of our approximation of the surface $S$, i.e. of the approximation by the single Cartan ribbon to the tubular neighborhood $\mathcal{S}_{\varepsilon}$ of width $2\varepsilon$ along the center curve $\gamma$:

\begin{equation}
\mathcal{M}(\gamma, \varepsilon) = \operatorname{Vol}(\mathcal{D}_{\varepsilon}) = \int_{J} \, \int_{-\varepsilon}^{\varepsilon} \, \int_{0}^{1} \,
\left|(R'_{t} \times R'_{u}) \cdot R'_{w} \right| \, dt \, du \, dw .
\end{equation}
\end{definition}

We then have the following evaluation of $\mathcal{D}_{\varepsilon}$.

\begin{theorem} \label{thmGoodness}
The goodness $\mathcal{M}(\gamma, \varepsilon)$ of the single ribbon approximation along a unit speed center curve $\gamma$ can be expressed  in terms of the curvature functions $H(t)$, $K(t)$, $\kappa_{n}(t)$ and $\tau_{g}(t)$ along $\gamma$ as follows:
\begin{equation} \label{eqGoodness}
\mathcal{M}(\gamma, \varepsilon) =   \frac{1}{3}\varepsilon^{3}\cdot \int_{J}\,F(H(t), K(t), \kappa_{n}(t), \tau_{g}(t)) \, dt + O(\varepsilon^{4})  ,
\end{equation}
where
\begin{equation}
F(H, K, \kappa_{n}, \tau_{g})
= \frac{ \kappa_{n}^{2}}{\left(\kappa_{n}^{2}
+\tau_{g}^{2}\right)^{3/2} } \cdot \left| \left( \tau_{g}^{2} - \kappa_{n}^{2} +2H\kappa_{n} - 2\tau_{g} \sqrt{2H\kappa_{n} - K - \kappa^{2}_{n}}\right)  \right|  .
\end{equation}
\end{theorem}

\begin{proof}
Using the parametrization of $\mathcal{D}_{\varepsilon}$ and the derivatives of the Darboux frame in \eqref{eq:01} we find that the volume element  $\left|(R'_{t} \times R'_{u}) \cdot R'_{w}\right|$ has the following leading term:
\begin{equation} \label{eqVolElem}
\left|(R'_{t} \times R'_{u}) \cdot R'_{w}\right| =  \left| \frac{1}{2} f''_{uu}(t,0) \cdot \frac{ u^{2} \cdot \kappa_{n}(t)}{\sqrt{\kappa_{n}^{2}(t) +\tau_{g}^{2}(t) }} \right| + O(u^{3}) .
\end{equation}

The second derivative $f''_{uu}(t,0)$ is precisely the normal curvature of the surface $S$ in the \emph{direction of the ruling line} of the Cartan ribbon at $\gamma(t)$. It can thence be expressed by the curvature function values $H(t)$, $K(t)$, $\kappa_{n}(t)$ and $\tau_{g}(t)$ at $\gamma(t)$ along $\gamma$:

\begin{equation} \label{eqDoublederiv}
f''_{uu}(t,0) = \frac{\kappa_{n}}{\kappa^{2}_{n} + \tau^{2}_{g}} \left( \tau_{g}^{2} - \kappa_{n}^{2} +2H\kappa_{n} - 2\tau_{g} \sqrt{2H\kappa_{n} - K - \kappa^{2}_{n}}\right)  .
\end{equation}

Insertion into \eqref{eqVolElem} then gives:

\begin{equation*}
\begin{aligned}
&\mathcal{M}(\gamma, \varepsilon) = \int_{J}\,\int_{-\varepsilon}^{\varepsilon} \, \int_{0}^{1} \left|(R'_{t} \times R'_{u}) \cdot R'_{w}\right| \, dt \, du \, dw \\
&= \int_{J}\,\int_{-\varepsilon}^{\varepsilon}\, \left| \frac{1}{2} \cdot \frac{u^{2} \cdot \kappa_{n}^{2}}{\left(\kappa_{n}^{2}
+\tau_{g}^{2}\right)^{3/2} }\left( \tau_{g}^{2} - \kappa_{n}^{2} +2H\kappa_{n} - 2\tau_{g} \sqrt{2H\kappa_{n} - K - \kappa^{2}_{n}}\right)  \right| + O(u^{3})\, dt \, du \\
&= \frac{1}{3}\varepsilon^{3} \cdot \int_{J}\,\left|\frac{ \kappa_{n}^{2}}{\left(\kappa_{n}^{2}
+\tau_{g}^{2}\right)^{3/2} }\left( \tau_{g}^{2} - \kappa_{n}^{2} +2H\kappa_{n} - 2\tau_{g} \sqrt{2H\kappa_{n} - K - \kappa^{2}_{n}}\right)  \right|  \, dt + O(\varepsilon^{4}) \\
&= \frac{1}{3}\varepsilon^{3}\cdot \int_{J}\,F(H(t), K(t), \kappa_{n}(t), \tau_{g}(t)) \, dt + O(\varepsilon^{4})  .
\end{aligned}
\end{equation*}
\end{proof}

\begin{corollary} \label{corCurvLines}
Suppose that the center curve $\gamma$ is a line of curvature on the surface $S$ -- as is the case for all the chosen center curves on the ellipsoid considered in section \ref{secExampEllipsoid} below. Then the geodesic torsion of $\gamma$ vanishes identically and the corresponding local measure of goodness of the Cartan ribbon along $\gamma$ reduces to:
\begin{equation}
\mathcal{M}(\gamma, \varepsilon) =   \frac{1}{3}\varepsilon^{3}\cdot \int_{J}\,\left|\kappa_{n}(h(t))\right|\, dt + O(\varepsilon^{4})  ,
\end{equation}
where $\kappa_{n}(h(t))$ denotes the normal curvature of $S$  at $\gamma(t)$ in the direction of $h(t)$, which is orthogonal to $\gamma'(t)$.
\end{corollary}
\begin{proof}
This follows directly from equation \eqref{eqVolElem} and the fact that in this case we have $f''_{uu}(t,0) = \kappa_{n}(h(t))$.
\end{proof}

Another consequence of theorem \ref{thmGoodness} is the following result, which is not surprising, since we are approximating the surface $S$ with flat Cartan ribbons:

\begin{corollary}
Suppose that the Gaussian curvature $K$ of $S$ vanishes identically along $\gamma$. Then
\begin{equation}
\mathcal{M}(\gamma, \varepsilon) =  O(\varepsilon^{4})  .
\end{equation}
\end{corollary}
\begin{proof}
This follows readily by inserting the following ingredients into the formula \eqref{eqGoodness}:
\begin{align*}
K(t) &= 0 , \\
H(t) &= \kappa_{1}(t) , \\
\kappa_{2}(t) &= 0 , \\
\tau_{g}(t) &= \kappa_{1}(t)\cos(\theta(t))\sin(\theta(t)) ,\\
\kappa_{n}(t) &= \kappa_{1}(t)\cos^{2}(\theta(t)) ,
\end{align*}
where $\theta(t)$ denotes the angle between $\gamma'(t)$ and the principal direction of curvature for $S$ at $\gamma(t)$ corresponding to the principal curvature $\kappa_{1}(t)$.
\end{proof}

\begin{remark}
Although theorem \ref{thmGoodness} is but an initial step towards a global measure of goodness for the total number of individual Cartan ribbons (that are in use for the overall approximation of a given full surface), it may still be possible and reasonable to apply the formula \eqref{eqGoodness} -- or a proper refinement of it -- for each ribbon and then simply sum the values of goodness over the number of ribbons. Naturally, the $u$-domain of integration should then not just be $[-\varepsilon, \varepsilon]$ but rather the full width-interval $[-w_{-}(t), w_{+}(t)]$ along the respective ribbons.
Moreover, good single ribbon approximations (and their higher dimensional analogues) represent an interesting alternative basis and tool for principal geodesic analysis, and for polynomial regression in general, on surfaces and in Riemannian manifolds, see \cite{fletcher2004a} and \cite{hinkle2012a}. In particular, in that setting the notion of Riemannian polynomials have also been studied via rolling maps, see \cite{jupp1987a} and \cite{leite2015a} -- much in the same vein as we have employed the concept of rolling in the present work.
\end{remark}

\subsection{The local cut-off procedure for neighboring ribbons} \label{subsecIntersect}

We consider two neighboring center curves $\gamma^{1}$ and $\gamma^{2}$ for two neighboring Cartan ribbons and prove the existence of their intersection curve, that eventually constitute the wedge (or cut-off) curve in space 'between' the two center curves, see the examples in sections \ref{secExampTorus} and \ref{secExampEllipsoid}. The wedge thereby defines the actual width functions $w^{2}_{-}$ and $w^{1}_{+}$, that are used for the final ribbonization of the surface $S$. In this setting $w^{1}_{+}$ is to be thought of as the cut off function for $\gamma^{1}$ in the direction towards $\gamma^{2}$, and $w^{2}_{1}$ is the cut off function for $\gamma^{2}$ in the (opposite) direction from $\gamma^{2}$  towards $\gamma^{1}$.

\begin{proposition}
The wedges are well-defined for each pair of neighboring Cartan ribbons, i.e. the cut-off functions exist, provided the corresponding center curves are pairwise sufficiently close to each other.
\end{proposition}

\begin{proof}
We sketch the proof as follows.
Suppose that $r_{1}$ is the ruling line at some point $p$ on $\gamma^{1}$. We must show that (for close-by neighboring center curves) there is a corresponding ruling line $r_{2}$ at some point of $\gamma^{2}$ so that the two rulings intersect in a (cut-off) point, i.e. so that $w^{1}_{+}$ and $w^{2}_{-}$ exist. Obviously, this does not necessarily work for center curves that are far apart from each other, so we need that the center curves are sufficiently close.

We may assume that the two center curves are neighboring coordinate curves in a special local parametrization of
 a tubular neighborhood around $\gamma^{1}$. Specifically, without lack of generality, we parametrize the neighborhood by a smooth vector function $\rho$ with parameters $t$ and $v$ such that the following properties are satisfied: $\rho(t,0) = \gamma^{1}(t)$; $\rho(t,\varepsilon) = \gamma^{2}(t)$; every $t$-coordinate curve has nonvanishing normal curvature:
  $\kappa_{n}(\rho'_{t}(t,v)) \neq 0$; and  $\rho'_{v}(t_{0}, v)$ is in the direction of the ruling line of the Cartan ribbon along the curve $\rho(t, v)$ at the point $\rho(t_{0}, v)$ for all $v \in [0, \varepsilon]$.

  This latter condition means that the curve $q_{t_{0}}(v) = \rho(t_{0}, v)$, $v \in [0, \varepsilon]$ has tangent lines that are ruling lines of the respective Cartan ribbons along the center curves $\rho(t, v)$ for each $v$ in the said interval.

If the curve  $q_{t_{0}}$ has nonzero curvature at $v = 0$ (and possibly also nonzero torsion there), then an intersection argument in the ambient space shows that there exists a ruling line of the Cartan ribbon at some point $\rho(t_{2}, \varepsilon)$ along the center curve $\rho(t, \varepsilon)$  close to  $\rho(t_{0}, \varepsilon)$, i.e. for $t_{2}$ close to $t_{0}$, which intersects the ruling line $r_{1}$ based at $p = \rho(t_{0}, 0)$ --  provided $\varepsilon$ is sufficiently small. If the torsion of the curve $q_{t_{0}}$ vanishes in the interval $v \in [0, \varepsilon]$ so that it is planar in that interval, then $t_{2} = t_{0}$ and the intersection takes place in that plane.

The same argument holds if $q_{t_{0}}$ has zero curvature at $v = 0$ but, say, has positive curvature for $ v \in ]\delta, \varepsilon]$. Moreover, if $q_{t_{0}}$ has zero  curvature in an interval, $v \in [0, \varepsilon[$, then $q_{t_{0}}$ is a straight line in that interval and every point on the ruler from $p$ is also a point on a ruler for the ribbon with center curve $\eta(t, v_{0})$ for any $v_{0}$ in that interval, and the corresponding cut-off value for $w^{1}_{+}$ can be chosen to be any value in $]0, \varepsilon[$.
\end{proof}

\section{Gauss--Bonnet inspection} \label{secGBinspect}

We consider a finite (piecewise smooth) ribbonization $\mathcal{R} = \cup_{i}^{R} \mathcal{R}_{i}$, $R= \# \mathcal{R}$, of $S$ all of whose Cartan surface ribbons $\mathcal{R}_{i}$, $i = 1, \ldots, R$,  are closed in the sense that they are based on closed smooth center curves on $S$ as in figures \ref{figTorusRib}
and \ref{figEllipsoidRib1}
below. Let $\mathcal{W} = \cup_{i} \mathcal{W}_{i}$ denote the system of (piecewise smooth) wedge curves stemming from the ribbonization $\mathcal{R}$ and let $\widehat{\mathcal{W}}$ denote the corresponding planar wedge curve system of the Cartan planar ribbons $\widehat{\mathcal{R}}$. The end (cut-)curves of the planar ribbons -- that are typically needed in order to obtain the planar representation of the ribbons -- are not considered part of $\widehat{\mathcal{W}}$.

We now apply the Gauss--Bonnet theorem to surfaces which are ribbonized by such circular ribbons.

The system of wedge curves $\mathcal{W}$ consists of curves with possible branch-points, where three or more ribbons come together, and with  possible end-points, where one ribbon is locally bent around the wedge (and is thus in contact with itself), as in the top and bottom ribbon on the ellipsoid in
Fig.~\ref{figEllipsoidRib1} below.

We may assume without lack of generality that the branch points and end points are all isolated and regular in the sense that the wedge curves in a neighbourhood of such points can be mapped diffeomorphically to a corresponding star configuration in $\mathbb{R}^{3}$ with a number of straight line segments issuing from a common vertex. The branch-points and end-points are called \emph{vertices} of the ribbonization $\mathcal{R}$. The vertex set is denoted by $\mathcal{P}$ and the number of vertices by $P = \# \mathcal{P}$. The number of  segments issuing from a given vertex $p_{k}$ in the vertex set $\mathcal{P}$ is called the \emph{degree}, $d_{k} = d(p_{k})$ of the vertex. If a ribbon has an isolated cone point then this is also a vertex, and -- in accordance with the above definition -- we count its degree as $0$.

\begin{theorem} \label{theoremE1}
The Euler characteristic, $\chi (\mathcal{R})$, of a ribbonization $\mathcal{R}$ is
 \begin{equation} \label{eqE1}
\chi (\mathcal{R})= \frac{1}{2} \sum_{k=1}^{P} \left(2 - d_{k} \right)  .
\end{equation}
\end{theorem}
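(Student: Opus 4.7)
The plan is to invoke the Gauss--Bonnet theorem twice, first on each individual ribbon and then globally on $\mathcal{R}$, and to compare the two identities. The reason I expect this approach to work is that every closed Cartan ribbon is intrinsically flat with Euler characteristic zero, so the local Gauss--Bonnet identities involve only boundary integrals and vertex-angle contributions, while the global one extracts $\chi(\mathcal{R})$ from the distributional curvature of the piecewise flat metric on $\mathcal{R}$.

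First I would observe that each closed Cartan ribbon $\mathcal{R}_i$ is topologically an annulus or a M\"obius band -- being built as a one-parameter family of ruling segments along a closed smooth centre curve -- so $\chi(\mathcal{R}_i)=0$. Applied to $\mathcal{R}_i$ as a flat surface with piecewise smooth boundary, Gauss--Bonnet yields $\int_{\partial \mathcal{R}_i} \kappa_g\, ds + \sum_{c}(\pi - \alpha_c) = 0$, where $\alpha_c$ is the interior angle at each corner $c$. By the regularity assumption on the star configurations at vertices, a vertex $p_k$ contributes exactly $d_k$ corners across the various $\mathcal{R}_i$ (one per flat sector), with the corresponding interior angles summing to the total angle $\theta_k$ around $p_k$. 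Summing over $i$ therefore gives
\[
\sum_i \int_{\partial \mathcal{R}_i} \kappa_g \, ds \;+\; \sum_{k=1}^{P}\bigl(d_k\pi - \theta_k\bigr) \;=\; 0.
\]

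Next I would regard $\mathcal{R}$ as a closed surface equipped with the piecewise flat metric it inherits from the ribbonization. Its Gaussian curvature vanishes in ribbon interiors, is concentrated at each vertex $p_k$ as a cone singularity of angular defect $2\pi - \theta_k$, and is distributed along each wedge curve $w$ shared by two ribbons $i,j$ with line density $\kappa_g^{(i)} + \kappa_g^{(j)}$. A careful orientation check shows that the sum $\sum_i \int_{\partial \mathcal{R}_i} \kappa_g\, ds$ counts every wedge once from each adjacent ribbon, so it coincides with the wedge-distribution integral on $\mathcal{R}$. The global Gauss--Bonnet theorem then reads
\[
2\pi\,\chi(\mathcal{R}) \;=\; \sum_{k=1}^{P}(2\pi - \theta_k) \;+\; \sum_i \int_{\partial \mathcal{R}_i} \kappa_g \, ds.
\]
Substituting the previous identity into this one eliminates both the cone angles $\theta_k$ and the wedge integrals simultaneously, leaving
\[
2\pi\,\chi(\mathcal{R}) \;=\; \sum_{k}(2\pi - \theta_k) + \sum_{k}(\theta_k - d_k\pi) \;=\; \pi \sum_{k}(2 - d_k),
\]
which after division by $2\pi$ is exactly the claimed formula.

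The step I expect to require the most care is the orientation bookkeeping that equates $\sum_i \int_{\partial \mathcal{R}_i}\kappa_g\,ds$ with the distributional wedge curvature of $\mathcal{R}$; this must accommodate possibly non-orientable ribbons and the two degenerate vertex types, namely $d_k = 0$ (isolated cone point on a ribbon) and $d_k = 1$ (free end-point where a ribbon is in contact with itself). Both of these are absorbed into the same counting, because the local picture still consists of $d_k$ flat sectors meeting at $p_k$, so the formula applies uniformly across all vertex types.
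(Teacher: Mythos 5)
Your proof is correct and is essentially the paper's own argument in a lightly reorganized form: your per-ribbon Gauss--Bonnet identity with $\chi(\mathcal{R}_i)=0$ is precisely the paper's cancellation step (its Lemma~\ref{lemMobius}, after regrouping the outer angles along each closed ribbon), and your global Gauss--Bonnet with curvature concentrated on wedges and at vertices reproduces the paper's three-part decomposition into surface, wedge, and vertex contributions, with your $\theta_k$ equal to the paper's $\sum_j \beta(j,k)$. Your treatment of the degenerate case $d_k=0$ (where ``$d_k$ flat sectors'' is vacuous and the term $-\theta_k$ is properly justified by excising a small loop around the cone point, whose turning supplies exactly that contribution) is no less rigorous than the paper's own handling of such vertices, so the argument stands.
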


\begin{proof}
The total curvature contributions for the Gauss--Bonnet theorem can be divided into three parts:\\

a) \emph{Surface contributions:} the surface integral of the Gauss curvature $K$,
\begin{equation}
 C_\mathcal{R\backslash W\cup P} = \int_{\mathcal{R\backslash W \cup P}} K d\mu = 0  .
 \end{equation}

b) \emph{Wedge contributions:}  The integral of the geodesic curvature along the edges of the Cartan ribbons excluding the vertex points,

\begin{equation}
C_\mathcal{W\backslash P} = \sum_{q=1}^{R} \,
\int_{{\mathcal{W}}_{q}\backslash \mathcal{P}_{q}} \kappa^{{\mathcal{W}}_{q}}(s) \, ds  .
\end{equation}

c) \emph{Vertex contributions:} sum of the angular deficit (angular defect) at the vertices, i.e. $2\pi$ minus the sum of the inner angles $\beta (j,k)$ at the vertices. The inner angles are replaced by the corresponding outer angles $\alpha(j,k)$ as $\alpha=\pi-\beta$ where $\alpha \in [-\pi, \pi]$ and $\beta \in [0,2\pi]$,

\begin{equation}
\begin{split}
C_\mathcal{P} & = \sum_{j=1}^{P} \, \left( 2\pi -\sum_{k=1}^{d_{k}} \beta(j,k) \right) \\
&=\sum_{k=1}^{P} \, \left( 2\pi -\sum_{j=1}^{d_{k}} (\pi - \alpha(j,k)) \right) \\
&=\sum_{k=1}^{P} \, \left( 2\pi -\pi d_{k}\right) + \sum_{k=1}^{P} \sum_{j=1}^{d_{k}} \alpha(j,k)  .
\end{split}
\end{equation}

\noindent \emph{Summarising}: Adding these contributions together we find:

\begin{equation}
2\pi \cdot \chi (\mathcal{R})=
\sum_{q=1}^{R} \,
\int_{{\mathcal{W}}_{q}\backslash \mathcal{P}_{q}} \kappa^{{\mathcal{W}}_{q}}(s) \, ds +
 \sum_{k=1}^{P} \sum_{j=1}^{d_{k}} \alpha(j,k)  + \sum_{k=1}^{P} (2\pi -\pi d_{k})  .
\end{equation}

By a permutation of the outer angles in the second term one can group them according to the ribbon wedge curves they appear on. This is possible because each of the kinks on the ribbons is encountered precisely once in the summation. Further, as the ribbons are closed it follows that their wedge integral and the corresponding sum of outer angles together cancels to zero. Hence one is left with the equality:
\begin{equation} \label{eqChi}
\chi=\frac{1}{2} \sum_{k=1}^{P}(2-d_{k})  .
\end{equation}

\end{proof}

\begin{remark}
As mentioned, the set of vertices, $\mathcal{P}$, is a feature of the three-dimensional mesh of wedge curves. Wedge curves from two, most commonly distinct, ribbons follow each other until a vertex point, where, e.g.~three ribbons come together. We summarise the different vertex characters with Table \ref{quick1}.

\begin{table}[h!]
\centering
\begin{tabular}{|l|l|l|}
  \hline
  $d_{k}$ & Vertex character & Classification \\
  \hline \hline
  0 & Fully circumscribed by one ribbon & Cone point\\
  1 & Half circumscribed by a ribbon & Wedge end point\\
  2 & Two ribbons meet at the point & Zero contributing vertex\\
   $>2$ & $n>2$ ribbons meet & Conventional vertex\\
  \hline
\end{tabular}
\vspace{.3cm}
\caption{A characterisation of vertices.}
\label{quick1}
\vspace{-0.50cm}
\end{table}

\end{remark}

The ribbon formula in Theorem \ref{theoremE1} is valid for orientable as well as non-orientable surfaces. To see this we only need to show that the formula does not change whether the ribbons are regular closed ribbons or M\"{o}bius strip-ribbons. This follows as a consequence of lemma \ref{lemMobius} below.

\begin{lemma} \label{lemMobius}
A conventional cylindrical closed ribbon (without vertices), and a M\"{o}bius strip-ribbon both contribute zero to the total curvature integral.
\end{lemma}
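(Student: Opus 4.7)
The plan is to apply the Gauss--Bonnet theorem to such a ribbon $\mathcal{R}$, viewed as an abstract compact Riemannian $2$-manifold with boundary. As a Cartan ribbon, $\mathcal{R}$ is developable, so its intrinsic Gauss curvature vanishes identically; and by assumption it has no vertices, so $\partial\mathcal{R}$ is everywhere smooth and no angular-defect term appears. The Gauss--Bonnet identity therefore collapses to
\begin{equation*}
\int_{\partial\mathcal{R}}\kappa_{g}\,ds \;=\; 2\pi\,\chi(\mathcal{R}),
\end{equation*}
so that the lemma is equivalent to showing $\chi(\mathcal{R})=0$ in each of the two cases.

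For the conventional cylindrical case this is immediate: a closed smooth center curve together with a periodic ruling field $\hat{\omega}_{t}/\Vert\hat{\omega}_{t}\Vert$ makes the parametrization \eqref{eqCartanDevPar} descend to a smooth embedding of $S^{1}\times[w_{-},w_{+}]$, so $\mathcal{R}$ is diffeomorphic to a cylinder and $\chi(\mathcal{R})=0$. The total wedge integral therefore vanishes, so the ribbon contributes zero to the global curvature integral.

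The M\"{o}bius case is where the main obstacle lies, since Gauss--Bonnet in its standard form presupposes orientability. I would handle this by passing to the orientable double cover $\pi\colon\widetilde{\mathcal{R}}\to\mathcal{R}$, which concretely unrolls the M\"{o}bius ribbon into a flat cylindrical Cartan ribbon of twice the center-curve length and still without vertices. The previous paragraph then gives $\int_{\partial\widetilde{\mathcal{R}}}\kappa_{g}\,ds=0$. The unique boundary circle of $\mathcal{R}$ pulls back to the two boundary circles of $\widetilde{\mathcal{R}}$, and since $\pi$ is a local isometry each lift projects onto $\partial\mathcal{R}$ as a degree-one isometric cover with the induced orientation matched. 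Consequently
\begin{equation*}
\int_{\partial\widetilde{\mathcal{R}}}\kappa_{g}\,ds \;=\; 2\int_{\partial\mathcal{R}}\kappa_{g}\,ds,
\end{equation*}
and the M\"{o}bius boundary integral must vanish as well.

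The step that I expect to require the most care is this ``degree-one, orientation-preserving'' claim for the two boundary lifts; a short explicit bookkeeping in the standard models $\mathcal{R}=[0,L]\times[-w,w]/(0,u)\sim(L,-u)$ and $\widetilde{\mathcal{R}}=[0,2L]\times[-w,w]/(0,u)\sim(2L,u)$, with projection $(s,u)\mapsto(s\!\!\mod L,\,(-1)^{\lfloor s/L\rfloor}u)$, settles it. All remaining ingredients are a direct invocation of Gauss--Bonnet, so both types of vertex-free closed ribbon contribute zero to the total curvature integral of the ribbonization.
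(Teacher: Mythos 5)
Your proposal is correct, but it proves the lemma by a genuinely different route than the paper. The paper's own proof is a cut-and-flatten argument: slice the closed ribbon along one ruler, so that it develops into a planar topological disk whose total Gauss--Bonnet contribution is $2\pi$; the cut is straight in the development, and at each of the two glued corner pairs the boundary was smooth before cutting, so the four artificial exterior angles sum to $2\pi$, leaving exactly zero for the ribbon's own edge integral. Crucially, this single computation covers both cases at once, since the M\"{o}bius gluing merely permutes which corners are identified (the paper's ``simple permutation of the four inner angles''), so the total of the artificial angles is unchanged and no question of orientability ever arises -- the object to which Gauss--Bonnet is applied is a planar disk. You instead apply Gauss--Bonnet intrinsically to the uncut ribbon: for the cylinder the result is immediate from $\chi=0$, and for the M\"{o}bius band you pass to the orientable double cover and verify, via the explicit models, that each of the two boundary circles of the covering annulus projects with degree one onto the single boundary circle of the M\"{o}bius ribbon, giving the factor $2$ and hence a vanishing boundary integral. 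Your flagged step is indeed the right one to worry about, and your bookkeeping settles it correctly (the loop $\partial\mathcal{R}$ is orientation-preserving in the band, so its preimage is two degree-one circles rather than one doubly-covering circle); one small point worth making explicit is that $\int_{\partial\mathcal{R}}\kappa_{g}\,ds$ is well defined on the non-orientable ribbon before any cover is chosen, because the geodesic curvature taken with respect to the inward normal satisfies $\langle\nabla_{-T}(-T),n\rangle=\langle\nabla_{T}T,n\rangle$ and is thus independent of the direction of traversal. In exchange for invoking the double-cover machinery, your argument is more structural -- it isolates $\chi(\mathcal{R})=0$ as the real reason for the vanishing and dovetails with the paper's own remark that $\chi(\widehat{\mathcal{R}})=2\chi(\mathcal{R})$ -- whereas the paper's cut is shorter, more elementary, and uniform across the orientable and non-orientable cases.
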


\begin{proof}
It follows simply by cutting the ribbons along a ruler. In this case, the ribbons can be fully flattened and has a total curvature contribution of $2\pi$ which is equal to the sum of the four artificial angles introduced by the cutting along the ruler. The difference between a ribbon that is orientable and one that is not consists of a simple permutation of the four inner angles of the cut.
\end{proof}

For the explicit extension of theorem \ref{theoremE1} to  Cartan ribbonizations that include ribbons with open center curves, it is sufficient to count the number $N_{O}$ of such ribbons, and equation \ref{eqE1} becomes:
\begin{equation}
\chi=\frac{1}{2} \sum_{k=1}^{P}(2-d_{k}) + N_{O}  .
\end{equation}

\begin{remark}
A necessary and sufficient criterion for the correct representation of the topology of the surface $S$ by a given ribbonization is the following: For each ribbon there exists a  homeomorphism which maps the ribbon to a domain on the surface such that
\begin{enumerate}
\item The contact structure (edges and vertices) between the individual ribbons is preserved
\item The full surface $S$ is covered precisely once by the images of the ribbons.
\end{enumerate}
For ribbonizations with sufficiently narrow ribbons, i.e.  with small cut-off functions $w_{-}$ and $w_{+}$, such homeomorphisms can for example be obtained via normal projection (along the orthogonal lines to $S$) of the ribbons into the surface.
\end{remark}

\subsection{From ribbon inspection to the Euler polyhedral formula} \label{subsecRibbon2Euler}

We consider a polyhedron $Q$ and apply the conventional notation, i.e.~$F$, $E$ and $V$ denote the number of faces, of edges, and of vertices, respectively, of the \emph{polyhedron}. To apply the ribbon formula \eqref{eqE1} we need to cover the polyhedron with closed ribbons. One can cover each one of the $F$ faces by a closed ribbon with a (flat) vertex covering the intrinsic part of the face polygon. With this choice there are then $F$ new such virtual vertices, all with degree zero. We therefore have the total number of ribbon vertices

\begin{equation}
P = V+F
\end{equation}
and
\begin{equation}
\sum_{k=1}^{P} d_{k} = 2\cdot E  .
\end{equation}

Hence we recover the well known polyhedron formula from the ribbon formula:

\begin{equation}
\chi(Q) = \frac{1}{2} \sum_{k=1}^{V}(2 - d_{k} ) =\frac{2(V+F)-2 E}{2} =V-E+F .
\end{equation}



\section{An unknot-based Cartan ribbonized torus} \label{secExampTorus}


This example is concerned with the ribbonization of the torus
\begin{equation}
\mathcal{T}^{2}:  \, \, \sigma(u,v)= \big((2+ \cos(u))\cdot \cos(v)\, ,\, (2+ \cos(u))\cdot \sin(v)\, , \, \sin(u) \big) \,  , \, (u, v) \in \mathbb{R}^{2}
\end{equation}
using the following two closed curves as center curves (see Fig.~\ref{figTorusKnots}):
\begin{equation} \label{eqTorusCenterCurves}
\begin{aligned}
\gamma_{1}(t) &= \sigma\left(3\cdot t\, , \, \, t \right)  , \quad t \in [-\pi, \pi]  \\
\gamma_{2}(t) &= \sigma \left(3 \cdot t \, , \, \,  t + \frac{\pi}{3} \right)  , \quad  t \in [-\pi, \pi] .
\end{aligned}
\end{equation}

\begin{figure}[h!]
\begin{center}
\includegraphics[width=60mm]{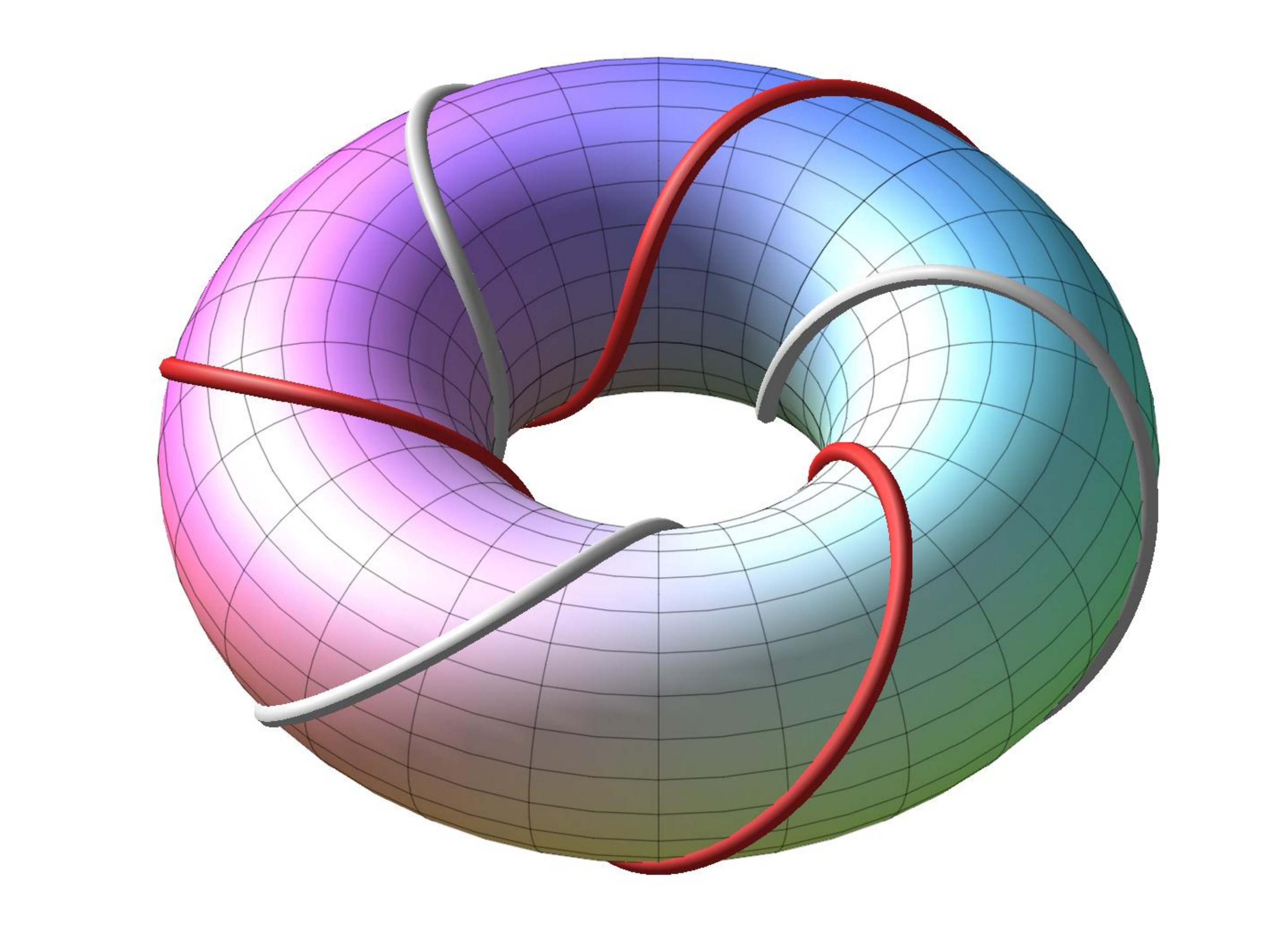}\qquad \includegraphics[width=53mm]{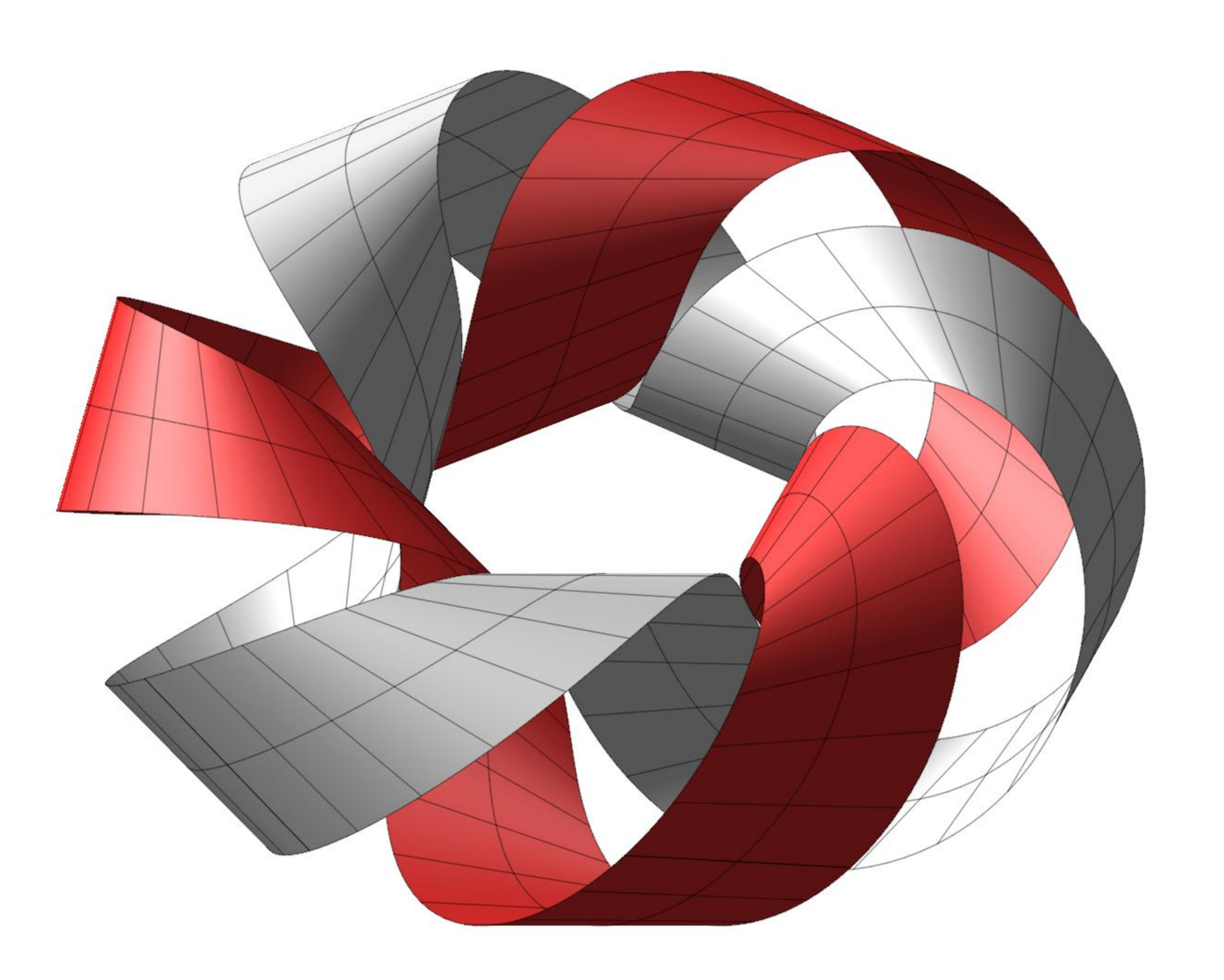}
\caption{Two $(3,1)$-unknots on a torus that are used as center curves for the beginning of a Cartan ribbonization of the torus.
See Fig.~\ref{figTorusRib} with the corresponding  ribbons, extended and cut-off.} \label{figTorusKnots}
\end{center}
\end{figure}

The corresponding two Cartan surface ribbons are then constructed (with constant and equal width functions) along the two curves, using the parametrization recipe in \eqref{eqCartanDevPar}. They are displayed on the right in Fig.~\ref{figTorusKnots}. The ribbons are then widened in $\mathbb{R}^{3}$ in the direction of $\pm \omega$ until intersection with their respective neighbour ribbons.

\begin{figure}[h!]
\begin{center}
\includegraphics[width=40mm]{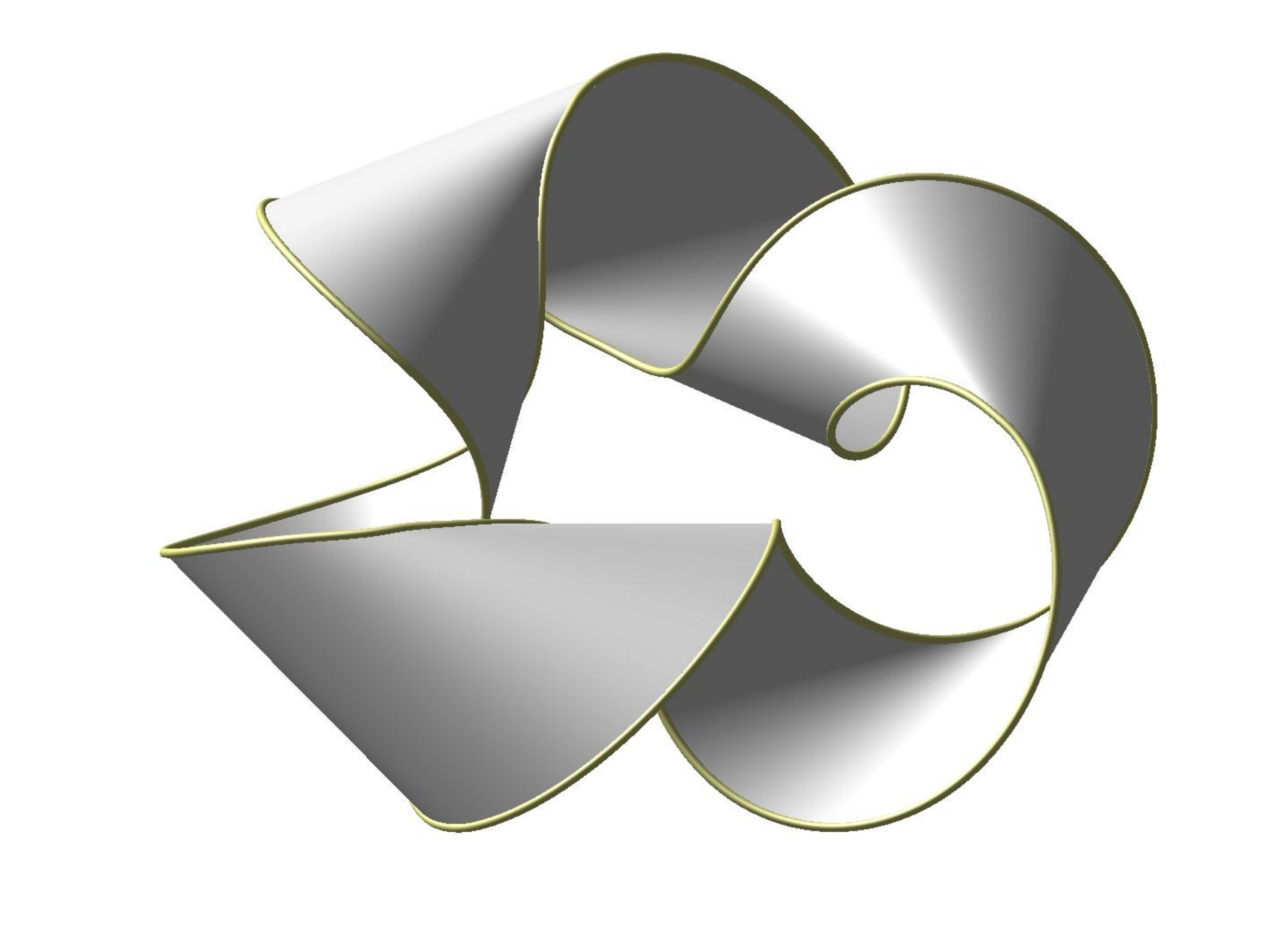}\quad \includegraphics[width=40mm]{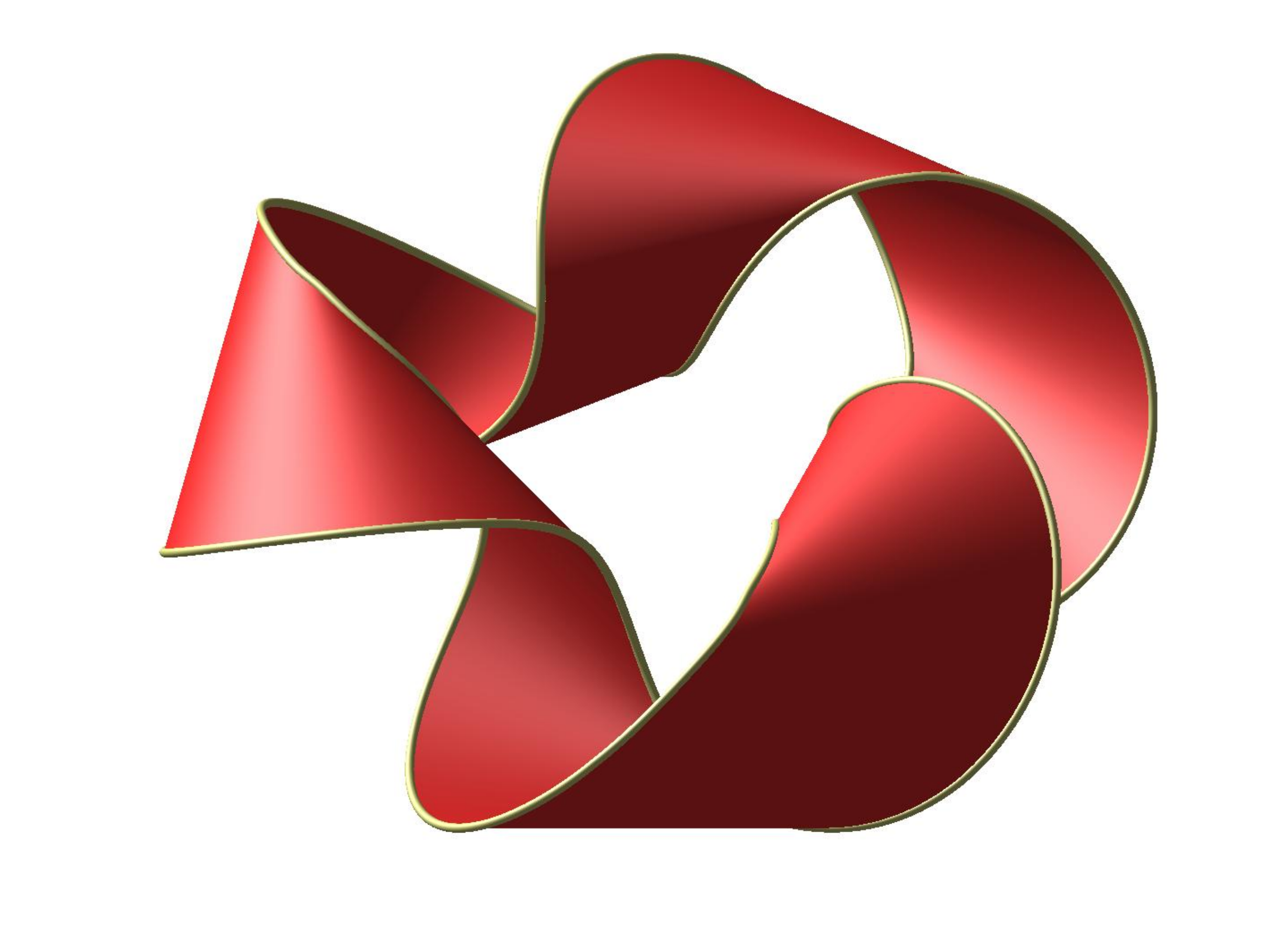}\quad \includegraphics[width=40mm]{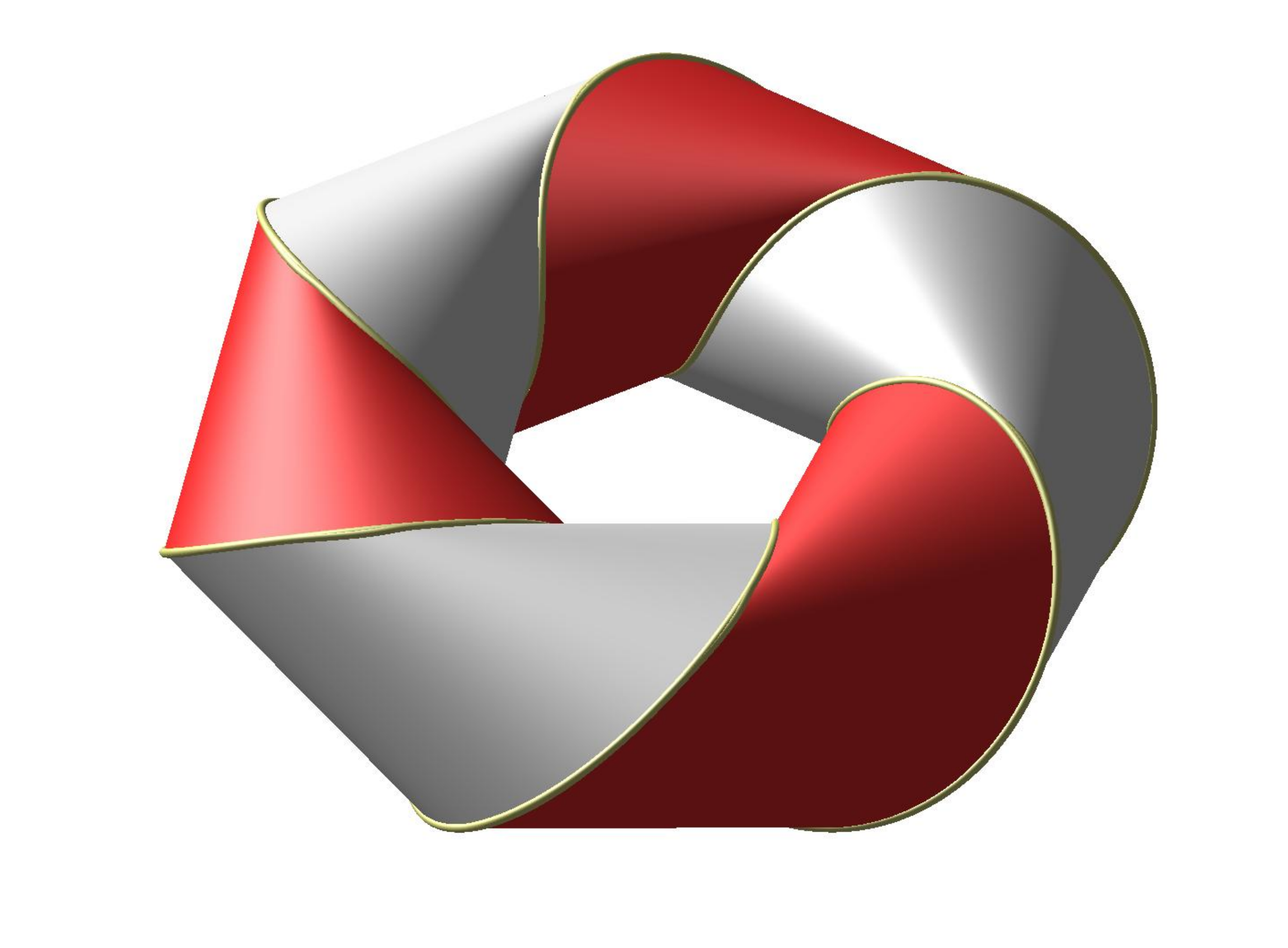}
\caption{Ribbonization of the torus along two $(3,1)$-unknots with the correct cut-off width functions.} \label{figTorusRib}
\end{center}
\end{figure}

In the present example the planar ribbons are constructed via the planar center curves $\tilde{\gamma}$ from \eqref{eqCurvConstruct} using the geodesic curvature function from the curves \eqref{eqTorusCenterCurves} on the torus, see figure \ref{figTorusRib2}.

\begin{figure}[h!]
\begin{center}
\includegraphics[width=110mm]{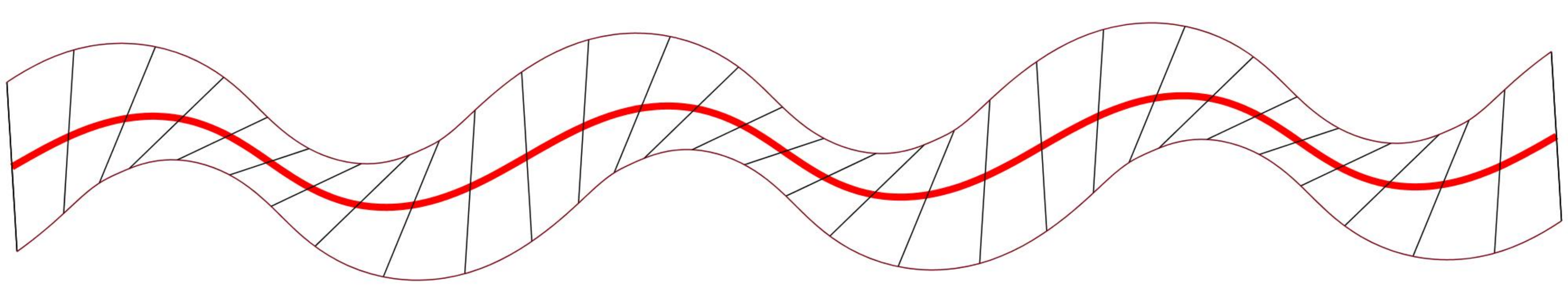}
\caption{The geometry of one of the two identical planar ribbons used for the covering of the torus in Fig.~\ref{figTorusRib}.} \label{figTorusRib2}
\end{center}
\end{figure}

The intersection width functions are obtained numerically by solving the intersection equation for each value of $t$ along the center curves, see Fig.~\ref{figTorusRib}.
Once the cut-off widths $w_{\pm}$ of the Cartan surface ribbons have been determined, the corresponding Cartan planar ribbons (with the same width-functions $w_{\pm}(t)$) are finally constructed from the planar center curve with the same geodesic curvature as the original center curve on the surface. In this particular case both Cartan planar ribbons are identical -- one of them is displayed in
Fig.~\ref{figTorusRib2}.

\subsection{Inspection of the ribbonized torus}
The number of vertices of the above ribbonization is $0$ and hence according to equation \eqref{eqChi} we get immediately  Euler characteristic $\chi = 0$ for the torus.

\section{Curvature line based ribbonizations of an ellipsoid} \label{secExampEllipsoid}

A curvature line parametrization of the ellipsoid with half axes $\sqrt{a}\,\, > \, \, \sqrt{b}\,\, >\,\, \sqrt{c} \, \, > 0$ is obtained as follows, see \cite{Sotomayor2008a} and \cite[Example 7.4]{hananoi2017}:

\begin{equation}
\label{eqEllipParam}
\sigma(u,v) = \Big( \pm \sqrt{\frac{a(a-u)(a-v)}{(a-b)(a-c)}}, \\
\pm \sqrt{\frac{b(b-u)(b-v)}{(b-a)(b-c)}},
\pm \sqrt{\frac{c(c-u)(c-v)}{(c-a)(c-b)}} \,\, \Big),
\end{equation}
where $u \in \, (b, a)$ and $v \in \, (c,b)$. This particular parametrization of the ellipsoid is shown in the leftmost display in Fig.~\ref{figEllipsoidRib1}. As shown on the display the coordinate (curvature) lines of this parametrization extend smoothly from one octant to a neighbouring octant except at the $4$ umbilical points on the ellipsoid corresponding to parameter values $u \to b$ and $v \to b$.

\begin{figure}[h!]
\begin{center}
\includegraphics[width=30mm]{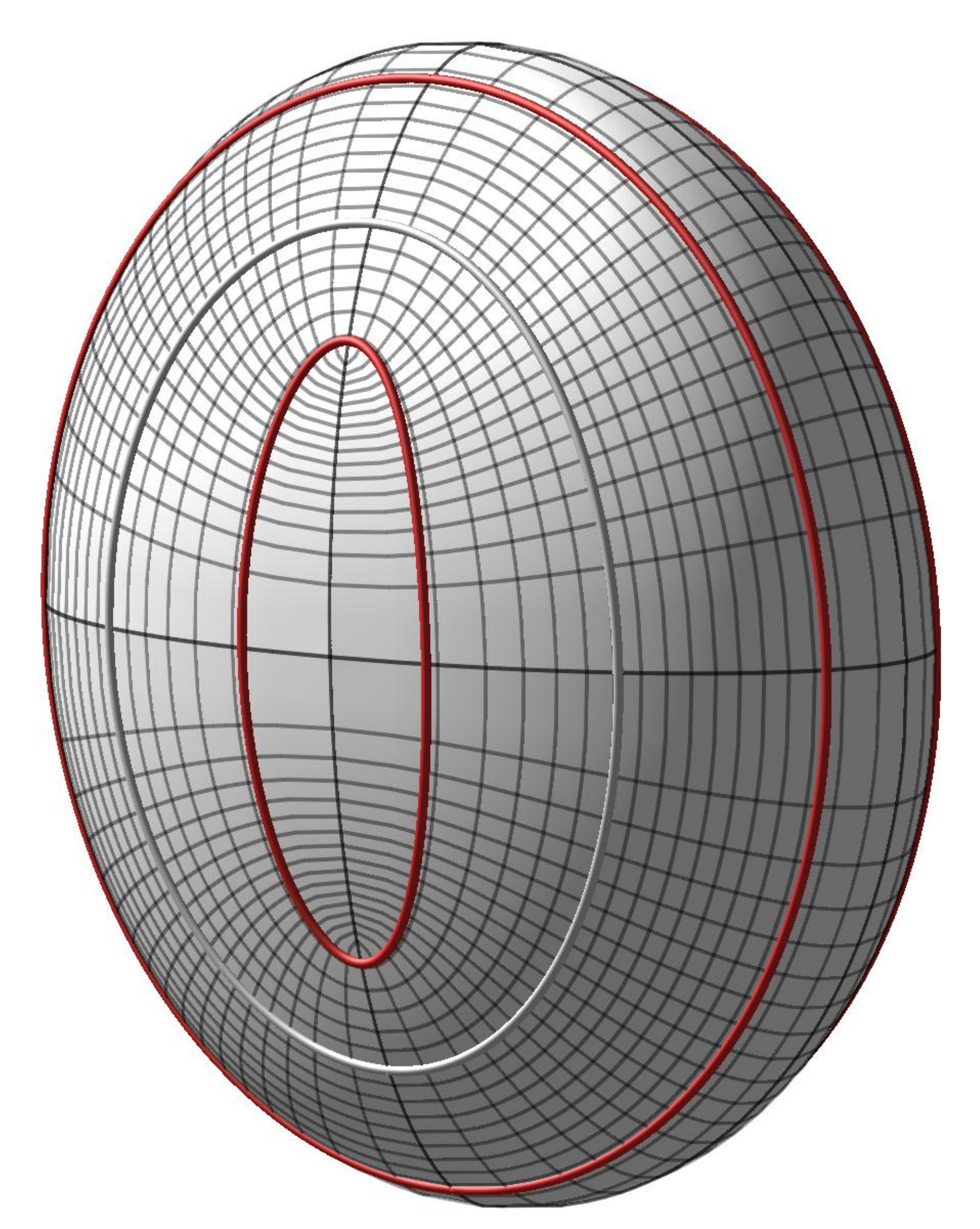}\quad \includegraphics[width=30mm]{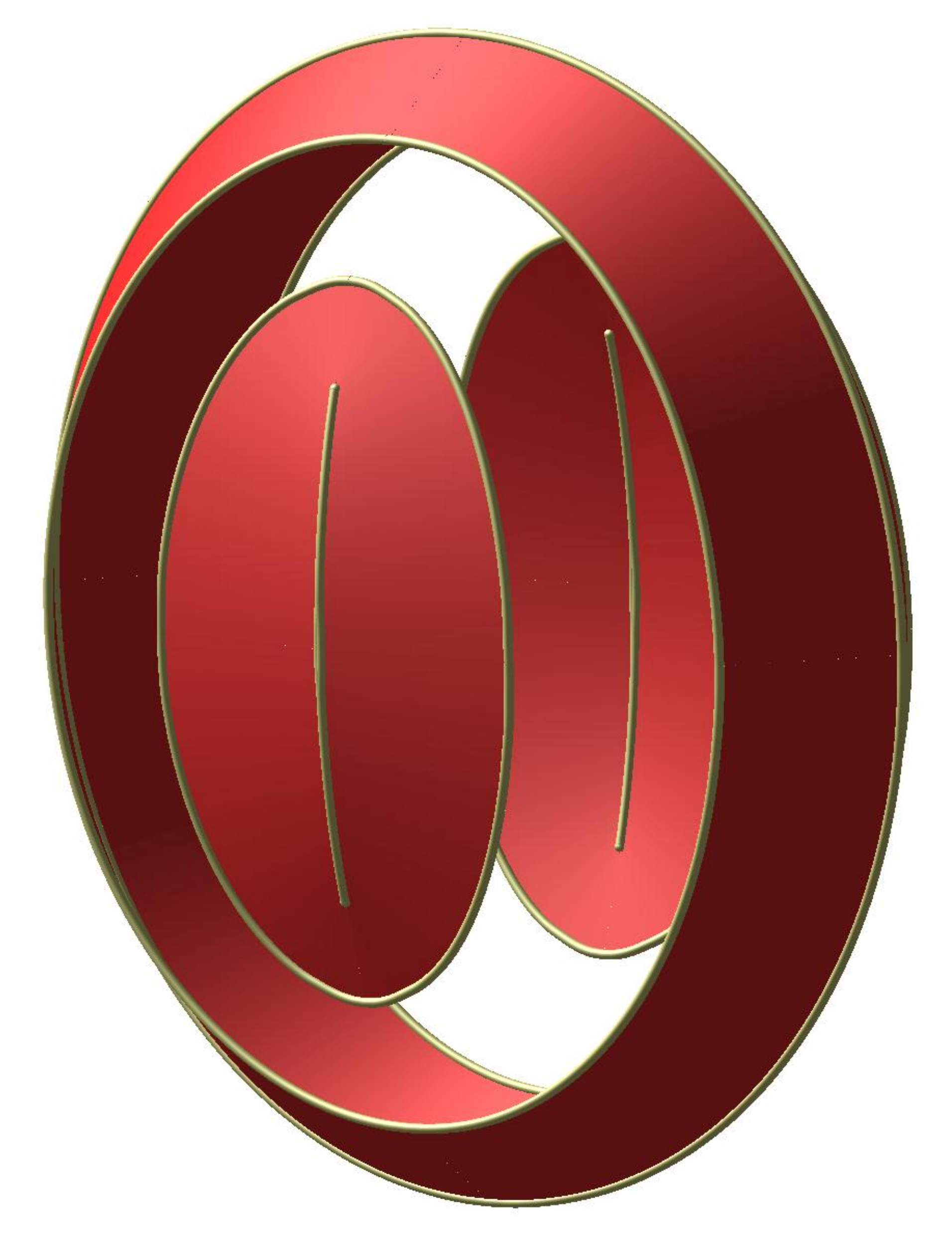}\quad \includegraphics[width=30mm]{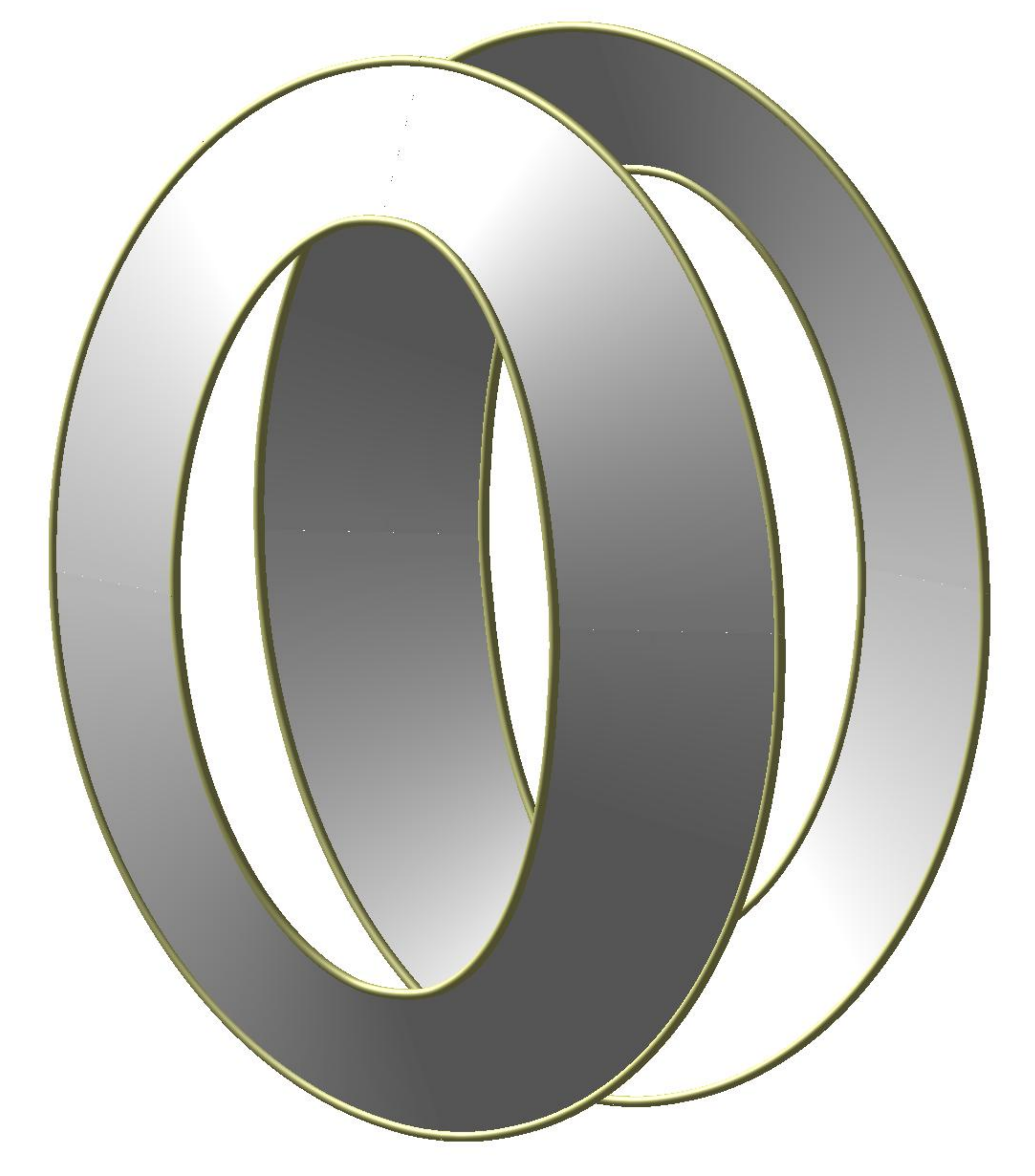}\quad \includegraphics[width=30mm]{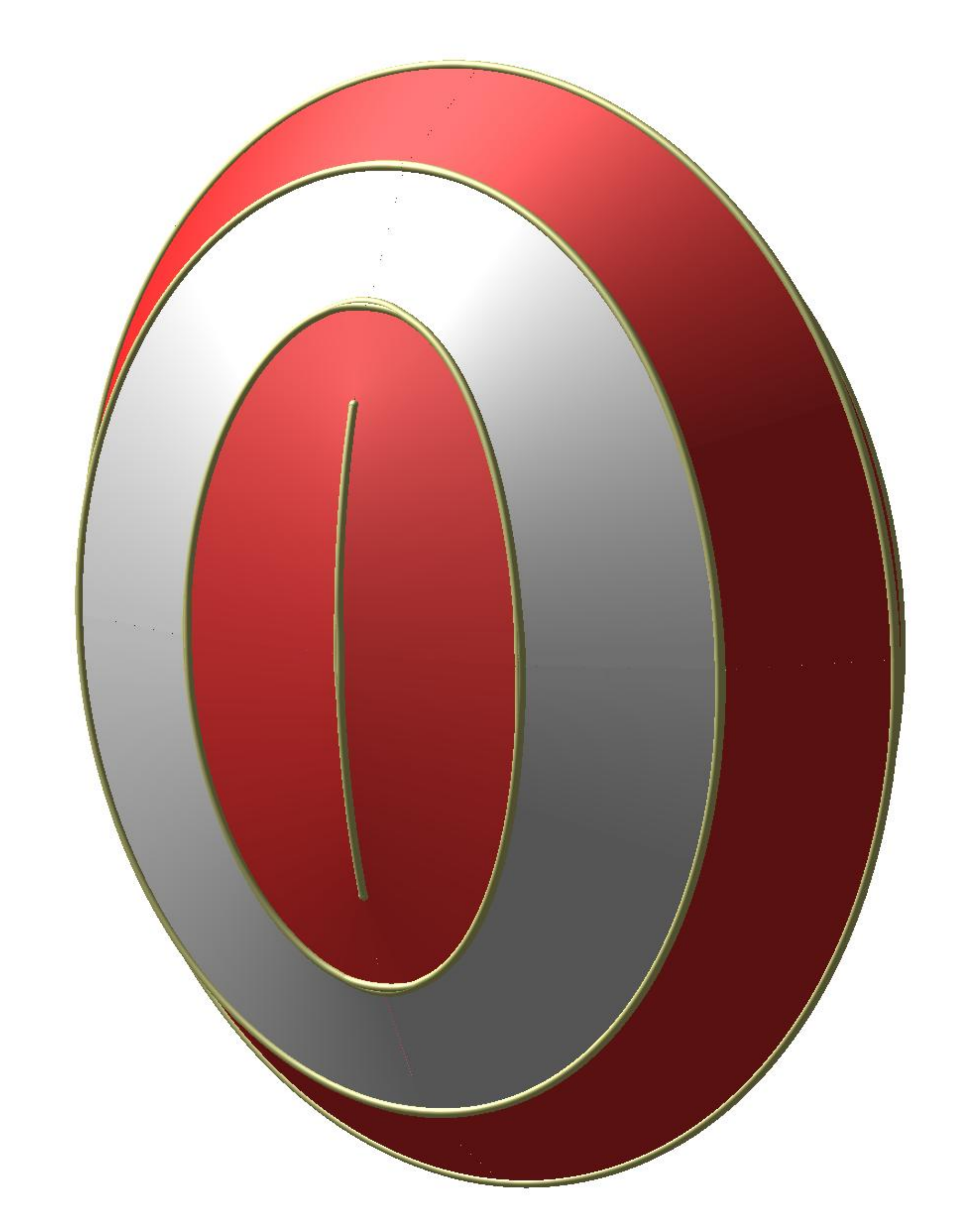}
\caption{Ribbonization of the ellipsoid with half axes $\sqrt{5}$, $2$, and $1$ corresponding to the parameters $(a,b,c) = (5,4,1)$ in the representation \eqref{eqEllipParam}. The ribbonization is built from $6$ Cartan surface ribbons along the indicated line-of-curvature center-lines which intersect the horizontal ``equator'' at equi-distributed points.} \label{figEllipsoidRib1}
\end{center}
\end{figure}

Such curvature line ribbonizations are interesting, partly because they give nontrivial illustrations of the simple measure of goodness established in corollary \ref{corCurvLines}, and partly because they also clearly highlights the
significant umbilical points. The umbilics on the ellipsoid considered here correspond to the four endpoints of the wedge segments
that appear on the top cap and on the bottom cap --  both visible in the second display from the left in figure \ref{figEllipsoidRib1}.

\subsection{Inspection of the ellipsoid}
The ellipsoid has $4$ vertices  -- corresponding to the $4$ umbilical points -- each of degree one, $d_{k} = 1$, and each therefore contributing one-half to the Euler characteristic, see equation \eqref{eqE1}.

\section{Comparison with classical topological inspections}

As illustrated above, the topology of the surface can be read off from a ribbonization -- in fact often in an easier way than from a triangulation. In this section we will briefly compare the above inspection with the methods of Morse and Poincar\'{e}-Hopf based on inspections of Morse height-functions and their corresponding vector fields, respectively.

Consider a Morse height-function $f$ on a surface $S$ and choose center curves for a ribbonization among level curves of $f$. Since the saddle points of $f$ are isolated the center curves can be chosen to be arbitrarily close and yet with tangents avoiding asymptotic directions, so that such ribbonizations exist and have the same topology as the surface. Moreover, as a third perspective, the gradient of $f$ on $S$ represents a vector field whose indices
also count its topology.

Based on a Morse height-function  these three topological inspections are all based on  countings of minima, maxima, and saddle points. Clearly, the final summations give the same result when applying  Table \ref{morse1} below.

\begin{table}[h!]
\centering
\begin{tabular}{|l|r|r|r|r|}
  \hline
  & Ribbon ($d_k$) & Morse ($\gamma$) & Vector field ($I$) \\
  \hline \hline
  Minimum     & 0 & 0 & 1\\
  Saddle point& 4 & 1 & -1\\
  Maximum    & 0      & 2 & 1\\
  \hline
  {\large $\chi$}           & $\frac{1}{2}\sum_{k=1}^{n_v}(2-d_k)$&
  $\sum_{\gamma=0}^{\gamma=2} (-1)^\gamma \, n_\gamma$&
  $\sum_{k=1}^{n_z} I_k$\\
  \hline
\end{tabular}
\vspace{.3cm}
\caption{Listing the relationship between degrees of vertices, $d_{k}$, Morse indices $\gamma$, and the indices, $I$, of a vector field for smooth two-dimensional manifolds. Also compared are the corresponding three topological inspections for the Euler characteristic: the ribbon inspection, based on vertex counting; the Morse index formula, based on critical points of Morse functions \cite{ni2004}; and the Poincar\'{e}-Hopf formula, based on the counting of types of zeros of a vector field \cite{milnor1965}.}
\label{morse1}
\vspace{-0.5cm}
\end{table}

In the case of a torus with its classical Morse height function, see \cite[Diagram 1 p. 1]{milnor1963}, the corresponding ribbonizations all have one minimum, one maximum (both with degree $d_{k} = 0$) and two saddle points (with degrees $d_{k} = 4$), so that the sum is $\frac{1}{2}\sum_{k=1}^{n_v}(2-d_k) = 0$, as expected. An interesting Morse height function for the non-orientable Boy's model of $\mathbb{R}P^{2}$ in $\mathbb{R}^{3}$, that may likewise be used as center curves for a ribbonization, is presented by U. Pinkall in \cite[Chapter 6, pp. 63--67, figures 6.7 and 6.8]{Pinkall1986}. This particular ribbonization has $4$ vertices of degree $d_{k} =0$, and $3$ vertices of degree $d_{k} = 4$, so that $\chi = 1$.

\section{Conclusions}

In this paper we recover the conditions for the existence of proper rollings of one surface on another \cite{nomizu1978, Kobayashi1963, tuncer2007, cui2010} -- in particular the condition that the two contact curves, that are generated from the rolling, have identical geodesic curvature. This follows from defining the standard rollings as rigid motions in $\mathbb{R}^3$ that are conditioned partly via their instantaneous rotation vectors and partly via the obvious condition of contact between the mentioned track curves on the respective surfaces, i.e.~common speed of the contact point along the tracks and common tangent planes at the instantaneous point of contact.

Surfaces are then approximated by a mesh of ribbons. Rolling a surface on a plane and using the Cartan developments of curves allow us to construct developable ribbons that have common tangent planes everywhere along the curve of contact on the surface. In this way we may approximate the surface not just by one such developable surface but by a full set of ribbons. In short, the surface is ribbonized by flat ribbons which have center-curve contact with the surface. This is a clear difference in comparison with the much used method of triangulations, which typically only give discrete point-contact with the surface. In the same way as for triangulations, defining a measure of ``goodness'' of a Cartan ribbonization is dependent on the actual application. Different methods for designing surfaces by developable patches within a desired global error bound have been developed in \cite{liu2007a,liu2009a,tang2016a,rabinovich2018a}. For Cartan ribbonizations, this is an interesting problem, which we have addressed by introducing a local measure of goodness for the approximation of the surface along a single ribbon.

Concerning the global structure of the approximations, we present a particularly simple topological inspection of the ribbonized surfaces, which gives the Euler characteristic of the ribbonization -- and thence also of the surface, if the ribbonization is fine enough.
The ensuing topological formula for the Gauss--Bonnet theorem involves only the vertices of the ribbonization and their degrees.
This complements the classical inspections of topology stemming from Morse theory and from the Poincar\'{e}-Hopf formula, which also amount to summing over critical point indices. If we organise the ribbonization of a given surface according to level curves of a Morse height function, then we obtain the direct correspondence shown in Table \ref{morse1}.

The intriguing relations between the kinematics of rolling and the geometry of developable surfaces clearly carries many more assets for future work than what we cover in the present paper. As indicated above, already the study of ribbonizations could well pave new ways for refined analyses of physical, geometrical, and topological properties of surfaces. Not to mention the potentials of their higher dimensional analogues. Possible practical applications are manifold and appear in such diverse fields as robotics, architecture, design, shape analysis, and modern engineering. See for example the following  works on rolling spherical robots \cite{bai2015a}, roof panelling \cite{mehrtens2007}, statistical geometric regression analysis \cite{fletcher2004a, hinkle2012a}, and the manufacturing of clothes \cite{Rose2007}.
\pagebreak



\begin{thebibliography}{99}

\bibitem{schumaker1993}
Schumaker LL.\\
1993 Triangulations in CAGD.\\
{\it IEEE Computer Graphics and Applications} {\bf 13}, 47-52.

\bibitem{lawrence2011}
Lawrence S.\\
2011 Developable surfaces: Their history and application.\\
{\it Nexus Network Journal} {\bf 13}, 701-714.
\bibitem{nomizu1978}
Nomizu K.\\
1978 Kinematics and differential geometry of submanifolds.\\
{\it T\^{o}hoku Math. Journ.} {\bf 30}, 623-637.
\bibitem{Kobayashi1963}
Kobayashi S, Nomizu K.\\
1963 {\it Foundations of Differential Geometry I},\\
Interscience Publishers, New York.

\bibitem{tuncer2007}
Tun\c{c}er Y, Sa\u{g}el MK, Yayli Y.\\
2007 Homothetic motion of submanifolds on the plane in $\mathbb{E}^3$.\\
{\it Journal of Dynamical Systems \& Geometric theories} {\bf 5}, 57-64.
\bibitem{cui2010}
Cui L, Dai JS.\\
2010 A Darboux-frame-based formulation of spin-rolling motion of rigid objects with point contact.\\
{\it IEEE Transactions on Robotics} {\bf 26}, 383-388.
\bibitem{molina2014}
Molina MG, Grong E.\\
2014 Geometric conditions for the existence of a rolling without twisting or slipping.\\
{\it Communications on Pure and Applied Analysis} {\bf 13}, 435-452.
\bibitem{izumiya2015}
Izumiya S, Otani S.\\
2015 Flat Approximations of Surfaces Along Curves.\\
{\it Demonstratio Mathematica} {\bf 48}, 217-241.
\bibitem{hananoi2017}
Hananoi S, Izumiya S.\\
2017 Normal developable surfaces of surfaces along curves.\\
{\it Proceedings of the Royal Society of Edinburgh Section A: Mathematics} pp. 1-27.
\bibitem{raffaelli2016}
Raffaelli M, Bohr J, Markvorsen S.\\
2016 Sculpturing surfaces with Cartan ribbons.\\
In Proceedings of Bridges 2016: Mathematics, Music, Art, Architecture, Education, Culture. pp. 457-460. Tessellations Publishing. (Bridges Conference Proceedings).
\bibitem{cui2015}
Cui L, Dai JS.\\
2015 From sliding-rolling loci to instantaneous kinematics: An adjoint approach.\\
{\it Mechanism and Machine Theory} {\bf 85}, 161-171.
\bibitem{chitour2015}
Chitour Y, Molina MG, Kokkonen P.\\
2015 Symmetries of the rolling model.\\
{\it Mathematische Zeitschrift.} {\bf 281}, 783-805.
\bibitem{tuncer2010}
Tun\c{c}er Y, Ekmekci N.\\
2010 A study on ruled surface in euclidean 3-space.\\
{\it Journal of Dynamical Systems \& Geometric theories} {\bf 8}, 49-57.
\bibitem{chitour2014}
Chitour Y, Molina MG, Kokkonen P.\\
2014 The rolling problem: overview and challenges
in Geometric Control Theory and Sub-Riemannian Geometry.\\
Editors: G. Stefani, J.P. Gauthier, M. Sigalotti, U. Boscain, A. Saryehev,
Springer International Publishing, 2014. 103-122.
\bibitem{krakowski2016}
Krakowski KA, Leite FS.\\
2016 Geometry of the rolling ellipsoid.\\
{\it Kybernetika} {\bf 52}, 209-223.
\bibitem{sharp2005}
Sharp J.\\
2005 D-forms and Developable Surfaces.\\
In Bridges 2005: Mathematical Connections between Art, Music and Science 503-510.
\bibitem{wills2006}
Wills T.\\
2006 D-Forms: 3D forms from two 2D sheets.\\
In Bridges 2006: Mathematical Connections in Art, Music, and Science 503-510.
\bibitem{orduno2016}
Ordu\~{n}o RR, Winard N, Bierwagen S, Shell D, Kalantar N, Borhani A, Akleman EA.\\
2016 A Mathematical Approach to Obtain Isoperimetric Shapes for D-Form Construction.
In Proceedings of Bridges 2016: Mathematics, Music, Art, Architecture, Education, Culture 277-284. Tessellations Publishing.
\bibitem{Gray}
Gray A, Abbena E, Salomon S.\\
2006 \emph{Third Edition: Modern differential geometry of curves and surfaces with
Mathematica\textsuperscript{\textregistered}},
ISBN: 978-0-58488-488-4. CRC Press, Boca Raton,  Florida.

\bibitem{DoCarmo} do Carmo MP.\\
1976 \emph{Differential geometry of curves and surfaces},
ISBN: 978-0132125895. Prentice Hall, Englewods Cliffs, New Jersey.

\bibitem{Izumiya2017}
Izumiya S, Saji K, Takeuchi N.\\
2017 Flat surfaces along cuspidal edges.\\
{\it Journal of singularities} {\bf16}, 73-100.
\bibitem{SingerThorpe}
Singer IM, Thorpe JA.\\
1967 \emph{Lecture notes on elementary topology and geometry.}\\
ISBN: 0-387-90202-3. Springer, New York, NY.


\bibitem{fletcher2004a}
Fletcher PT, Lu CL, Pizer SA, and Joshi S.\\
2004 Principal geodesic analysis for the study of nonlinear statistics of shape. \\
{\it Ieee Transactions on Medical Imaging} {\bf 23}, 995-1005.

\bibitem{hinkle2012a}
Hinkle J, Muralidharan P, Fletcher, PT, Joshi S.\\
2012 Polynomial Regression on Riemannian Manifolds. \\
{\it Lecture Notes in Computer Science} {\bf 7574}, 1-14.\\
ISBN: 9783642337116. Springer, Berlin.

\bibitem{jupp1987a}
Jupp PE, Kent JT.\\
1987 Fitting smooth paths to spherical data.\\
{\it Journal of the Royal Statistical Society Series C-applied Statistics} {\bf 36}, 43-46.\\
ISSN: 14679876, 00359254.

\bibitem{leite2015a}
Leite FS,  Krakowski KA.\\
2015 Covariant differentiation under rolling maps.\\
{\it Preprint Number 08–22, Universidade de Coimbra}.

\bibitem{Sotomayor2008a}
Sotomayor J, Garcia R.\\
2008 Lines of Curvature on Surfaces, Historical Comments and Recent Developments.\\
{\it S\~{a}o Paulo Journal of Mathematical Sciences} {\bf 2}, 99-143.

\bibitem{ni2004}
Ni X, Garland M, Hart JC.\\
2004 Fair Morse functions for extracting the topological structure of a surface mesh.\\
{\it ACM Transactions on Graphics (TOG)} {\bf 23}, 613-622.
\bibitem{milnor1965}
Milnor JW.\\
1965 \emph{Topology from the differentiable viewpoint}.\\
ISBN: 0-691-04833-9. The University Press of Virginia, Charlottesville.

\bibitem{milnor1963}
Milnor JW.\\
1965 \emph{Morse Theory}.\\
Annals of Mathematics Studies 51.\\
Princeton University Press.

\bibitem{Pinkall1986}
Barth W, et al.\\
1986 {\emph{Mathematical models.}}\\
ISBN: 3-528-08991-1. Friedr. Vieweg \& Sohn, Braunschweig.

\bibitem{liu2007a}
Liu Y-J, Lai Y-K, Hu S-M.\\
2007 Developable strip approximation of parametric surfaces with global error bounds.\\
{\it Pacific Graphics 2007: 15th Pacific Conference on Computer Graphics and Applications}, pp. 441-444.

\bibitem{liu2009a}
Liu Y-J, Lai Y-K, Hu S.\\
2009 Stripification of Free-Form Surfaces With Global Error Bounds for Developable Approximation.\\
{\it{IEEE Transactions on Automation Science and Engineering}} {\bf 6} 700-709.
\bibitem{tang2016a}
Tang C, Bo P, Wallner J, Pottmann H.\\
2016 Interactive Design of Developable Surfaces.\\
{\it Acm Transactions on Graphics} {\bf 35} 1-12.
\bibitem{rabinovich2018a}
Rabinovich M, Hoffmann T, Sorkine-Hornung O.\\
2018 Discrete geodesic nets for modeling developable surfaces.\\
{\it{ACM Transactions on Graphics}} {\bf 37} 16:1-16:17.
\bibitem{bai2015a}
Bai Y,  Svinin M, Yamamoto M.\\
2015 Motion planning for a pendulum-driven rolling robot tracing spherical contact curves.\\
{\it Ieee International Conference on Intelligent Robots and Systems}, 4053-4058.\\
ISBN: 9781479999941, 9781479999934. Institute of Electrical and Electronics Engineers Inc.

\bibitem{mehrtens2007}
Mehrtens P,  Schneider M.\\
2012 Bahn frei f\"{u}r die Architektur -- Approximation von Freiform-Fl\"{a}chen durch abwickelbare Streifen.\\
{\it Stahlbau}, {\bf 81}, Heft 12, 931-934.

\bibitem{Rose2007}
Rose K, Sheffer A, Wither J, Cani M~P, Thibert B.\\
2007 Developable Surfaces from Arbitrary Sketched Boundaries.\\
{\it Eurographics Symposium on Geometry Processing (2007).} \\
The Eurographics Association 2007.

\end{thebibliography}
\end{document}